\long\def\comment#1\endcomment{}
\theoremstyle{plain}
\newtheorem{theorem}{{\sc Theorem}}[section]
\newtheorem{lemma}[theorem]{\sc Lemma}
\newtheorem{coroll}[theorem]{\sc Corollary}
\newcommand{\appsection}[1]{\let\oldthesection\thesection
  \renewcommand{\thesection}{\sc{Appendix \oldthesection}}
  \section{#1}\let\thesection\oldthesection}
\newcommand{\appsubsection}[1]{\let\oldthesubsection\thesubsection
  \renewcommand{\thesubsection}{\sc\oldthesubsection}
  \subsection{#1}\let\thesubsection\oldthesubsection}
\newcommand{\apptheorem}{
\let\oldthetheorem\thetheorem
\renewcommand{\thetheorem}{\sc \oldthetheorem}
\theorem\let\thetheorem\oldthetheorem}
\renewcommand{\thesection}{\sc \arabic {section}}
\renewcommand{\thetheorem}{\thesection.{\sc \arabic {theorem}}}
\theoremstyle{plain}
\newtheorem{defn}[theorem]{\sc Definition}
\theoremstyle{exercise}
\newtheorem{remark}[theorem]{\sc Remark}
\newtheorem{example}[theorem]{\sc Example}
\makeatletter \@addtoreset{equation}{section} \makeatother
\def\eqref#1{\thetag{\ref{#1}}}
\let\latexref=\ref
\def\ref#1{{\normalfont{\latexref{#1}}}}
\newcommand{\udot}{{\:\raisebox{3pt}{\text{\circle*{1.5}}}}}
\def\dlim_#1{{\displaystyle\lim_{#1}}^\hdot}
\newcommand{\gr}{\mathrm{gr}}
\newcommand{\Ext}{\operatorname{Ext}}
\newcommand{\Ker}{\operatorname{{\rm Ker}}}
\newcommand{\eqto}{\mathrel{\stackrel{\sim}{\to}}}
\newcommand{\Mod}{\mathscr{M}\mathrm{od}}
\newcommand{\Cobar}{\mathrm{Cobar}}
\newcommand{\free}{\mathrm{free}}
\newcommand{\Alt}{\mathrm{Alt}}
\newcommand{\Cycl}{\mathrm{Cycl}}
\newcommand{\Symm}{\mathrm{Symm}}
\newcommand{\g}{\mathfrak{g}}
\renewcommand{\Bar}{\mathrm{Bar}}
\newcommand{\pol}{\mathrm{pol}}
\title{\sc The PBW property for associative algebras as an integrability condition}
\author{\sc Boris Shoikhet}
\date{}
\begin{document}\maketitle

\begin{abstract}
We develop an elementary method for proving the PBW theorem for associative algebras with an ascending filtration.
The idea is roughly the following. At first, we deduce a proof of the PBW property for the {\it ascending} filtration (with the filtered degree equal to the total degree in $x_i$'s) to a suitable PBW-like property for the {\it descending} filtration (with the filtered degree equal to the power of a polynomial parameter $\hbar$, introduced to the problem). This PBW property for the descending filtration guarantees the genuine PBW property for the ascending filtration, for almost all specializations of the parameter $\hbar$.
At second, we develop some very constructive method for proving this PBW-like property for the descending filtration by powers of $\hbar$, emphasizing its integrability nature.

We show how the method works in three examples.
As a first example, we give a proof of the classical Poincar\'{e}-Birkhoff-Witt theorem for Lie algebras. As a second, much less trivial example, we present a new proof of a result of Etingof and Ginzburg [EG] on PBW property of algebras with a cyclic non-commutative potential in three variables.
Finally, as a third example, we found a criterium, for a general quadratic algebra which is the quotient-algebra of $T(V)[\hbar]$ by the two-sided ideal, generated by $(x_i\otimes x_j-x_j\otimes x_i-\hbar\phi_{ij})_{i,j}$, with $\phi_{ij}$ general quadratic non-commutative polynomials, to be a PBW for generic specialization $\hbar=a$. This result seems to be new.
\end{abstract}

\section*{\sc Introduction}
\subsection{}
Let $\mathfrak{g}$ be a Lie algebra over a field of characteristic 0.
Recall that its universal enveloping algebra is defined as the quotient-algebra
$$
\mathcal{U}(\mathfrak{g})=T(\mathfrak{g})/(x\otimes y-y\otimes x-[x,y])_{x,y\in\mathfrak{g}}
$$
of the tensor algebra by the two-sided ideal generated by expressions $x\otimes y-y\otimes x-[x,y]$, for all $x,y\in\mathfrak{g}$. The universal enveloping algebra admits a natural ascending filtration $\{F_k\}$, where $F_k\subset \mathcal{U}(\mathfrak{g})$ is a $k$-subspace generated by all monomials of degree $\le k$ in generators. It is an algebra filtration,
$$
F_i*F_j\subset F_{i+j}
$$
and one defines the associated graded algebra
$$
\gr_F\mathcal{U}(\mathfrak{g})=\oplus_{i\ge -1}F_{i+1}/F_{i}
$$
where $F_{-1}=0$.

The classical Poincar\'{e}-Birkhoff-Witt theorem says that the algebras $\gr_F\mathcal{U}(\mathfrak{g})$ and the symmetric algebra $S(\mathfrak{g})$ are (canonically) isomorphic as graded algebras. Essentially it means that
$$
F_{k}/F_{k-1}\simeq S^k(\mathfrak{g})
$$

Note that this PBW theorem can be thought as a consequence of a kind of integrability condition. For example, if an associative algebra $A$ with generators $x_1,\dots,x_n$ is defined by relations
$$
x_i\otimes x_j-x_j\otimes x_i-\sum_kc_{ij}^kx_k=0
$$
for each pair $1\le i<j\le n$,
but the ``structure constants'' $c_{ij}^k$ do not obey the Jacobi identity, the Poincar\'{e}-Birkhoff-Witt theorem fails.
The corresponding quotient-algebra by the two-sided ideal has a ``smaller size'' than $S(x_1,\dots,x_n)$.

\subsection{}
More generally, consider a vector space $V$ over $k$ with basis $\{x_1,\dots,x_n\}$, consider the tensor algebra $T(V)$, and its $\hbar$-linear version $T(V)[\hbar]$, where $\hbar$ is a formal parameter. For each pair $1\le i<j\le n$ consider an element $\varphi_{ij}\in\hbar\cdot T(V)[\hbar]$, and consider the quotient-algebra
\begin{equation}\label{eq01}
A=T(V)[\hbar]/(x_i\otimes x_j-x_j\otimes x_i-\varphi_{ij})_{1\le i<j\le n}
\end{equation}
When $\deg_{x}\varphi_{ij}\le 2$ for all $i<j$, the algebra $A$ admits the ascending (Poincar\'{e}-Birkhoff-Witt-like) filtration $\{F_k\}$ where $F_k$ is spanned by the monomials of total degree $\le k$ in $\{x_i\}$. The following problem raises up naturally:

{\it Whether it is true, for a particular algebra $A$ of type \eqref{eq01}, that for any $k\ge 0$ one has $F_k/F_{k+1}=S^k(x_1,\dots,x_n)[\hbar]$, for any $k\ge 0$?}

We call this property {\it the Poincar\'{e}-Birkhoff-Witt, or shortly PBW, property}.

In practice, it may be very difficult to check our whether an algebra $A$ of type \eqref{eq01} is a PBW algebra. In this paper, we suggest a very general method for proving the PBW property, which emphasizes the ``integrability'' nature of the PBW condition.

An algebra $A$ of type \eqref{eq01} admits, along with the ascending filtration $\{F_k\}$, the following descending filtration $\{\Gamma_k\}$:
\begin{equation}
\Gamma_k=\hbar^k\cdot A
\end{equation}
It is an algebra filtration as well, and the associated graded algebra is defined:
\begin{equation}
\gr_{\Gamma}A=\oplus_{i\ge 0}\Gamma_i/\Gamma_{i+1}
\end{equation}

Our first Theorem \eqref{main1} shows that, provided some PBW-like property for the descending filtration $\Gamma$ is fulfilled, the algebra $A$ is a PBW algebra, for all but a countable number of specializations at $a\in k$.

This PBW-like property for the descending filtration $\Gamma$ reads:
\begin{equation}\label{eq05}
\text{For each $k\ge 0$, one has $\Gamma_k/\Gamma_{k+1}=\hbar^k\cdot S(V)$}
\end{equation}

On the other side, we develop a very powerful method for proving \eqref{eq05}, which hopefully may work in many cases, in our second Theorem \ref{lemma21bis}.

It is instructive to consider what happens with $\gr_\Gamma A$ for our ``not-a-Lie-algebra'' example, that is, for the relations
\begin{equation}
x_i\otimes x_j-x_j\otimes x_i-\hbar\sum_kc_{ij}^kx_k
\end{equation}
where $\{c_{ij}^k\}$ fail to satisfy the Jacobi identity.

It is easy to see that $\Gamma_0/\Gamma_1\simeq S(V)$ and $\Gamma_1/\Gamma_2\simeq \hbar\cdot S(V)$.
However, suppose $\{c_{ij}^k\}$ do not satisfy the Jacobi identity:
\begin{equation}
\sum_{a}\left(c_{ij}^ac_{ak}^b+c_{jk}ac_{ai}^b+c_{ki}^ac_{aj}^b\right)\ne 0
\end{equation}
for some $i,j,k$ and for some $b$.

An easy computation shows that
\begin{equation}\label{eq09}
[[x_i,x_j],x_k]+[[x_j,x_k],x_i]+[[x_k,x_i],x_j]=\hbar^2\sum_{a,b}(c_{ij}^ac_{ak}^b+c_{jk}ac_{ai}^b+c_{ki}^ac_{aj}^b)\cdot x_b
\end{equation}
where $[x,y]:=x*y-y*x$.

The left-hand side of \eqref{eq09} vanishes for any associative algebra. Therefore, the right-hand side of \eqref{eq09} is a linear in $\{x_s\}$ element in $A$ which is 0. This element belongs to $\Gamma_2$ and defines 0 in $\Gamma_2/\Gamma_3$. Therefore,
\begin{equation}
\Gamma_2/\Gamma_3=\hbar^2\cdot (S(V)/(\sum_{a,b}(c_{ij}^ac_{ak}^b+c_{jk}ac_{ai}^b+c_{ki}^ac_{aj}^b)\cdot x_b)_{i,j,k})
\end{equation}
is the quotient by the ideal generated by the r.h.s. of \eqref{eq09} for all $i,j,k$.

\subsection{}
This paper is a much elaborated version of our earlier preprint [Sh2].

Along with more clear exposition of ideas of [Sh2], this paper contains several new things, listed below.

(1) We upgrade Lemma \ref{lemma21} (which is quoted from [Sh2]) to a more powerful Theorem \ref{lemma21bis}; roughly speaking, now it is not necessary to perturb all the components of the differential, but only the two most extreme ones, $d(\xi_{ij})$ and $d(\xi_{ijk})$ (among which the component $d(\xi_{ij})$ is given automatically).

(2) We give an application of our method to a new proof of results of Etingof-Ginzburg [EG], on the PBW property of non-commutative algebras ``with cyclic cubic potential'', for generic parameters. Our proof is contained in Section \ref{section32}.

(3) As another application, we study the case of a ``general'' quadratic algebra, as follows.
Let
\begin{equation}
\varphi_{ij}=\hbar\sum_{a,b}\alpha_{ij}^{ab}x_a\otimes x_b
\end{equation}
with $\alpha_{ij}^{ab}\in k$, where $\alpha_{ij}^{ab}=-\alpha_{ji}^{ab}$ {\it but in general $\alpha_{ij}^{ab}\ne\alpha_{ij}^{ba}$}.
Consider the algebra
$A_\alpha$ defined as the quotient algebra
$$
A_\alpha=T(V)[\hbar]/(x_i\otimes x_j-x_j\otimes x_i-\varphi_{ij})_{i,j}
$$
We prove that this algebra is PBW for generic specialization $\hbar=a\in k$, if the following identity holds: for any $i,j,k,b,c,d$
\begin{equation}\label{ux}
\Cycl_{i,j,k}\sum_s (\alpha_{jk}^{sb}\alpha_{is}^{cd}+ \alpha_{jk}^{cs}\alpha_{is}^{db})=0
\end{equation}
Note that \eqref{ux} implies the Poisson condition
$$
\Cycl_{ijk}\Symm_{abc}\sum_s\beta_{is}^{ab}\beta_{jk}^{sc}=0
$$
for the bivector $\beta=\sum_{ij}\beta_{ij}\partial_i\wedge\partial_j$ where $$\beta_{ij}=\sum_{a,b}(\alpha_{ij}^{ab}+\alpha_{ij}^{ba})x_ax_b$$
This result seems to be new.

\subsubsection*{\sc Acknowledgements}
I am grateful to Pavel Etingof, Boris Feigin, Victor Ginzburg, Volodya Rubtsov, and Vadik Vologodsky, for many fruitful discussions. I would like to mention that it is Volodya Rubtsov whose many questions on my previous preprint [Sh2] inspired and stimulated me to develop these ideas further, in attempt of answering them, which have been resulted in the current paper. As well, I would like to mention that Pavel Etingof was the first to whom I communicated my proof of PBW for the Etingof-Gingburg algebras, and his inspiration was another good stimulus to continue this work. As well, many of Pavel's explanations and remarks were very helpful. This work, whose first ``take'' was published as loc.cit., had being started in Luxembourg around 2007. I am thankful to the University of Luxembourg and personally to Martin Schlichenmaier, for excellent working conditions and for financial support, throughout the research
grant R1F105L15 of the University of Luxembourg.

\section{\sc General theory}

\subsection{\sc The PBW property}
Let $k$ be a field, $V=\{x_1,\dots,x_n\}$ a finite-dimensional vector space over $k$ with basis $\{x_1,\dots, x_n\}$,
$n=\dim_k V$.
Denote by $T(V)$ the free associative algebra over $k$ with generators $V$, it is identified with the tensor algebra $T(x_1,\dots,x_n)$ in variables $x_1,\dots,x_n$.

Let $\hbar$ be a formal parameter. For any pair $(i,j)$, $1\le i<j\le n$, choose an element $\varphi_{ij}\in\hbar\cdot T(V)[\hbar]$.
Suppose that $\deg_{x}\varphi_{ij}\le 2$.
The main object of our study is the associative algebra
\begin{equation}\label{eq1}
A=T(V)[\hbar]/(x_i\otimes x_j-x_j\otimes x_i-\varphi_{ij})_{1\le i<j\le n}
\end{equation}
which is the $\hbar$-linear quotient-algebra of the free algebra $T(V)[\hbar]$ by the two-sided ideal, generated by
\begin{equation}\label{eq2}
x_i\otimes x_j-x_j\otimes x_i-\varphi_{ij}
\end{equation}
for all $1\le i<j\le n$. We set also $\varphi_{ij}=-\varphi_{ji}$ if $i>j$, and $\varphi_{ii}=0$.

With our condition $\deg_{x}\varphi_{ij}\le 2$, the algebra $A$ is filtered with an ascending filtration $\{F_k\}$.
By definition, $F_k$ is the image under the natural projection $p\colon T(V)[\hbar]\to A$ of the vector space $W_k\subset T(V)[\hbar]$ spanned by monomials whose total degree by all $\{x_i\}$ is $\le k$. It is an algebra filtration, that is
\begin{equation}\label{eq3}
F_i*F_j\subset F_{i+j}
\end{equation}
where $*$ is the product in $A$.

It allows to consider the associated graded algebra
\begin{equation}\label{eq4}
\gr_F A=\oplus_{i\ge -1}F_{i+1}/F_i
\end{equation}
where by definition $F_{-1}=0$.

\begin{defn}\label{pbwd1}
{\rm The associative algebra $A$ defined in \eqref{eq1} is said to satisfy {\it the Poincar\'{e}-Birkhoff-Witt (PBW) property} if there is graded $\hbar$-linear isomorphism
\begin{equation}\label{eq7}
\bigoplus_{\ell\ge 0}F_\ell A/F_{\ell-1}A\simeq S(V)[\hbar]
\end{equation}
such that each consecutive quotient $F_\ell/F_{\ell-1}$ is $\hbar$-linearly isomorphic
to the $\ell$-th symmetric power $S^\ell(V)[\hbar]$.}
\end{defn}

A closely related property is the PBW property for algebras over $k$. In our study, these algebras will appear as the specializations of $k[\hbar]$-linear algebras.

Let $V=\{x_1,x_2,\dots\}$, and let $\psi_{s}\in T(V)$ be non-commutative polynomials of degree $\le 2$. Consider the associative algebra
\begin{equation}\label{eq5}
B=T(V)/(\psi_s)
\end{equation}
The algebra $B$ is endowed with natural ascending filtration (which is denoted also by $F$), where $F_k$ is formed by the elements of degree $\le k$ in $\{x_i\}$. Denote $F_{-1}=0$.

\begin{defn}\label{pbwd1bis}{\rm
The associative algebra $B$ defined in \eqref{eq5} is said to satisfy the PBW property if there is an isomorphism of graded algebras:
\begin{equation}\label{eq8}
\bigoplus_{\ell\ge 0}F_\ell/F_{\ell-1}\simeq S(V)
\end{equation}
}
\end{defn}
The following Lemma is very elementary. We include it here for further references.
\begin{lemma}\label{trivial}
Let $A$ be a $k[\hbar]$-algebra as in \eqref{eq1}. Suppose the algebra $A$ satisfies the $k[\hbar]$-linear PBW condition of Definition \ref{pbwd1}. Then for an $a\in k$, the specialization
\begin{equation}
A_a:=A\otimes_{k[\hbar]}k[\hbar]/(\hbar-a)
\end{equation}
is an algebra $B$ as in \eqref{eq5}, with
\begin{equation}\label{eq9}
\psi_{ij}=x_i\otimes x_j-x_j\otimes x_i-\varphi_{ij}(a)
\end{equation}
 Moreover, any specialization $A_a$, $a\in k$, satisfies the PBW condition of Definition \ref{pbwd1bis}.
\end{lemma}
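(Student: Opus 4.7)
The plan is to handle the two assertions separately. The first is essentially a formal consequence of right-exactness of $-\otimes_{k[\hbar]}k[\hbar]/(\hbar-a)$; the second uses the PBW hypothesis on $A$ via a flatness argument over the PID $k[\hbar]$.

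For the first assertion, I would write $A = T(V)[\hbar]/I$, where $I$ is the two-sided ideal generated by the relations $x_i\otimes x_j - x_j\otimes x_i - \varphi_{ij}$ of \eqref{eq2}. Right-exactness of specialization gives
$$A_a = T(V)[\hbar]\otimes_{k[\hbar]}k[\hbar]/(\hbar-a)\Big/\mathrm{image}(I) \cong T(V)/\overline{I},$$
where $\overline{I}$ is the two-sided ideal of $T(V)$ generated by the images of the generators of $I$ under $\hbar\mapsto a$. These images are precisely the $\psi_{ij}$ of \eqref{eq9}, so $A_a$ is an algebra of the form \eqref{eq5}.

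For the PBW property of $A_a$, equip $A_a$ with the filtration $F_\ell A_a := p_a(F_\ell A)$ induced by the specialization map $p_a\colon A\to A_a$. The PBW hypothesis on $A$ says that each quotient $F_\ell A/F_{\ell-1}A$ is isomorphic, as a $k[\hbar]$-module, to $S^\ell(V)[\hbar]$, and in particular is free of finite rank $\dim_k S^\ell(V)$. Starting from $F_{-1}A=0$ and using that an extension of a free module by a free module over a PID is free, I would prove by induction on $\ell$ that each $F_\ell A$ is a free (hence flat) $k[\hbar]$-module. Consequently the short exact sequence
$$0 \to F_{\ell-1}A \to F_\ell A \to F_\ell A/F_{\ell-1}A \to 0$$
remains exact after tensoring with $k[\hbar]/(\hbar-a)$, yielding
$$F_\ell A_a/F_{\ell-1} A_a \;\cong\; (F_\ell A/F_{\ell-1}A)\otimes_{k[\hbar]}k[\hbar]/(\hbar-a) \;\cong\; S^\ell(V).$$
Summing over $\ell$ and noting that the resulting isomorphism is multiplicative (since the PBW isomorphism for $A$ is one of graded $k[\hbar]$-algebras and $p_a$ is a ring homomorphism) yields the PBW property of $A_a$ in the sense of Definition \ref{pbwd1bis}.

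No step is a real obstacle; the only point worth checking carefully is the flatness (freeness) of the filtration pieces $F_\ell A$, which is where the hypothesis is used. Everything else — the identification of $A_a$ with a quotient of $T(V)$, and the compatibility of specialization with the algebra structure — is formal. The proof is short, which matches the paper's remark that Lemma \ref{trivial} is ``very elementary''.
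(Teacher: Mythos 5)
Your proof is correct and follows essentially the same route as the paper: right-exactness of $-\otimes_{k[\hbar]}k[\hbar]/(\hbar-a)$ identifies $A_a$ with $T(V)/(\psi_{ij})$, and freeness (hence flatness) of the filtration pieces over the PID $k[\hbar]$, coming from the PBW hypothesis on $\gr_F A$, shows that specialization commutes with the consecutive quotients $F_\ell/F_{\ell-1}$. Your explicit induction showing each $F_\ell A$ is free is a slightly more careful rendering of the same flatness argument the paper uses.
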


\begin{proof}
We need to know that $A$ is a free $k[\hbar]$-module, which follows from Definition \ref{pbwd1}, saying that $\gr_F A$ is.
Now $A$ is a quotient $k[\hbar]$-module $A=T(V)[\hbar]/I[\hbar]$ where $I$ is the two-sided ideal. As $A$ is a free (therefore, a flat) $k[\hbar]$-module, the specialization $A_a$ is the quotient $T(V)[\hbar]_a/I[\hbar]_a$ of specializations, whence the first assertion follows.
To prove that $A_a$ is a PBW as in Definition \ref{pbwd1bis}, we first mention that $F_\ell(A)/F_{\ell-1}(A)$ is a free $k[\hbar]$-module, by \eqref{eq7}. Therefore, $(F_\ell(A)/F_{\ell-1}(A))_a\simeq F_\ell(A)_a/F_{\ell-1}(A)_a$. The right-hand side is $F_\ell(B)/F_{\ell-1}(B)$, with $B=A_a$, by the first assertion of Lemma.
\end{proof}

\subsection{\sc First Main Theorem}
The main topic of this paper is a rather general method, which reduces the proving of PBW property to another, more easily checked, property.
Let us formulate it.

Along with the {\it ascending} filtration $\{F_k\}$, the associative algebra $A$ admits the natural {\it descending} filtration $\{\Gamma\}$ by powers of $\hbar$:
\begin{equation}
\Gamma_\ell=\hbar^\ell A
\end{equation}
It is also an algebra filtration:
\begin{equation}
\Gamma_i*\Gamma_j\subset \Gamma_{i+j}
\end{equation}
which allows to define the associated graded algebra
\begin{equation}
\gr_\Gamma A=\oplus_{i\ge 0} \Gamma_i/\Gamma_{i+1}
\end{equation}

\begin{defn}\label{pbwd2}{\rm
The associative algebra $A$ is said to satisfy {\it the PBW-like property for the descending filtration $\Gamma$} if $\oplus_{\ell\ge 0}\Gamma_\ell A/\Gamma_{\ell+1}A\simeq S(V)[\hbar]$ as a graded $k[\hbar]$-algebra, such that $\Gamma_\ell A/\Gamma_{\ell+1}A\simeq \hbar^\ell S(V)$.
}
\end{defn}

Our main result in Section 1 is:
\begin{theorem}\label{main1}
Let $k$ be a field and let $A$ be as above. Suppose $A$ satisfies the PBW-like property for the descending filtration $\Gamma$, see Definition \ref{pbwd2}. Then:
\begin{itemize}
\item[(1)]
Suppose that
\begin{equation}\label{s1}
\bigcap_{\ell\ge 0}\Gamma_\ell=0
\end{equation}
Then the algebra $A$ obeys the PBW property of Definition \ref{pbwd2}, with respect to the ascending filtration $\{F_k\}$. Its specialization $$A_a=A\otimes_{k[\hbar]}k[\hbar]/(\hbar-a)$$ is a PBW algebra in the sense of Definition \ref{pbwd1bis} and is $T(V)/(\psi_{ij})$, with $\psi_{ij}$ as in \eqref{eq9}, for any $a\in k$.
\item[(2)]
Suppose that the vector space $V=\{x_1,x_2,\dots\}$ in \eqref{eq1} is finite-dimensional over $k$. Then in general, when $\bigcap_{k\ge 0}\Gamma_k\ne 0$, its support $\mathscr{S}$ (as of a $k[\hbar]$-module) is at most a countable set of points $s\in k$. For any $a\in k\backslash \mathscr{S}$, the specialization $A_a=A\otimes_{k[\hbar]}k[\hbar]/(\hbar-a)$ is a PBW algebra.
\end{itemize}
\end{theorem}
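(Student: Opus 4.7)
My plan is to construct the $k[\hbar]$-linear ``PBW section'' $\mu\colon S(V)[\hbar]\to A$ sending each monomial $x_1^{\alpha_1}\cdots x_n^{\alpha_n}$ to the corresponding ordered product in $A$, and to prove that under the hypothesis $\bigcap_\ell\Gamma_\ell=0$ the map $\mu$ is a $k[\hbar]$-linear isomorphism. Once $\mu$ is iso, the PBW property of Definition~\ref{pbwd1} is immediate from strictness with respect to the $x$-degree filtration, giving $F_\ell A/F_{\ell-1}A\simeq S^\ell(V)[\hbar]$, and each specialization $A_a$ is PBW by Lemma~\ref{trivial}.

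Injectivity of $\mu$ will be immediate from $\gr_\Gamma A\simeq S(V)[\hbar]$: if $\mu(P)=0$ for nonzero $P=\hbar^m P_m+\hbar^{m+1}P_{m+1}+\cdots$ with $P_m\ne 0$ in $S(V)$, then $\mu(P)\in\Gamma_m$ has class $\hbar^m P_m\ne 0$ in $\Gamma_m/\Gamma_{m+1}$, a contradiction. The same iso, applied to multiplication by $\hbar$ on $\gr_\Gamma A$, combined with $\bigcap_\ell\Gamma_\ell=0$, will then show that $A$ itself is $\hbar$-torsion-free: any annihilator of $\hbar$ would have a top nonzero $\Gamma$-class, killed by $\hbar$ in $\gr_\Gamma A$, contradicting $\hbar$-injectivity of $S(V)[\hbar]$.

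For surjectivity I work inductively in the ascending filtration. Each $F_N A$ is a finitely generated $k[\hbar]$-module (since $V$ is finite-dimensional), is $\hbar$-torsion-free by the above, and is $\hbar$-adically separated, since $\bigcap_k\hbar^k F_N A\subseteq F_N A\cap\bigcap_k\Gamma_k=0$. The structure theorem over the PID $k[\hbar]$ then forces $F_N A$ to be $k[\hbar]$-free: $\hbar$-power torsion is excluded by $\hbar$-torsion-freeness, and any other torsion summand $k[\hbar]/(p^e)$ with $p\ne\hbar$ would have $\hbar$ acting as a unit, violating separation. A rank count using the degree-zero piece of the descending PBW $A/\hbar A\simeq S(V)$ together with the fact that the generic fibre $F_N A\otimes_{k[\hbar]}k(\hbar)$ is spanned by images of ordered monomials of $x$-degree $\le N$ pins $\mathrm{rank}_{k[\hbar]}F_N A=\dim_k S^{\le N}(V)$. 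Thus $\mu_N:=\mu|_{S^{\le N}(V)[\hbar]}$ is an injection of free $k[\hbar]$-modules of equal finite rank. The main obstacle is to upgrade this to ``iso'': naively, iso modulo $\hbar$ is \emph{not} enough (as exemplified by multiplication by $1+\hbar$ on $k[\hbar]$). The sharper ingredient I will use is that $\mu$ is strict for the $\Gamma$-filtration, and that the induced map on every graded piece of $\gr_\Gamma$ is, under the given iso $\gr_\Gamma A\simeq S(V)[\hbar]$, literally the identity; together with freeness and equal rank this forces the matrix of $\mu_N$ in compatibly chosen bases to be a unit.

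For part~(2), set $\Theta:=\bigcap_\ell\Gamma_\ell$. First, $A/\Theta$ satisfies the descending PBW hypothesis (since $\Theta\subset\Gamma_k$ for every $k$, one has $\gr_\Gamma(A/\Theta)=\gr_\Gamma A\simeq S(V)[\hbar]$) and by construction $\bigcap_\ell\Gamma_\ell(A/\Theta)=0$, so part~(1) applies and $A/\Theta$ is PBW. Hence every $F_N(A/\Theta)$ is $k[\hbar]$-free of rank $\dim_k S^{\le N}(V)$, and via the short exact sequence $0\to\Theta\cap F_N A\to F_N A\to F_N(A/\Theta)\to 0$ a rank comparison shows $\Theta\cap F_N A$ is a finitely generated torsion $k[\hbar]$-module, whose support is therefore a finite set of closed points. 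The support $\mathscr{S}$ of $\Theta=\bigcup_N(\Theta\cap F_N A)$ is thus a countable union of finite sets, hence countable. For $a\in k\setminus\mathscr{S}$ the localization $\Theta_{(\hbar-a)}$ vanishes and one deduces $\Theta/(\hbar-a)\Theta=0$; hence $A_a=(A/\Theta)_a$, which is PBW by Lemma~\ref{trivial} applied to $A/\Theta$.
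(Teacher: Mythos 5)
The crux of your part (1) is the surjectivity of $\mu_N$, and the ``sharper ingredient'' you invoke there is not a valid criterion. An injective map of free $k[\hbar]$-modules of equal finite rank which is strict for the $\hbar$-adic filtration and induces the identity on every graded piece of $\gr_\Gamma$ need \emph{not} be onto: your own example, multiplication by $1+\hbar$ on $k[\hbar]$, satisfies all of these conditions (it is strict, since $(1+\hbar)k[\hbar]\cap\hbar^\ell k[\hbar]=\hbar^\ell(1+\hbar)k[\hbar]$, and it induces the identity on each $\hbar^\ell k[\hbar]/\hbar^{\ell+1}k[\hbar]$), yet its cokernel is $k[\hbar]/(1+\hbar)\neq 0$. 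So ``strict $+$ identity on $\gr_\Gamma$ $+$ free $+$ equal rank $\Rightarrow$ unit matrix'' is false, and nothing else in your write-up excludes a cokernel of the form $k[\hbar]/(p(\hbar)^e)$ with $p(0)\neq 0$; the hypothesis $\bigcap_\ell\Gamma_\ell=0$ gives separatedness, not $\hbar$-adic completeness, so a $\gr$-isomorphism cannot be promoted to an isomorphism by general nonsense. Equivalently, your steps show that $\mu_N$ is an injection of free modules of equal rank, but they do not exclude torsion of $F_\ell A/F_{\ell-1}A$ at primes $p(\hbar)\neq\hbar$ --- which is exactly the phenomenon part (2) exists to handle. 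This is where the paper's proof takes an essentially different turn: it never tries to prove that the ordered-monomial section is onto; it applies the two-filtration Lemma \ref{l2f} to $F_kA/F_{k-1}A$, obtaining $\gr^j_\Gamma(F_kA/F_{k-1}A)\simeq\hbar^jS^k(V)$ for every $j$, and from this (together with separatedness, i.e.\ after killing $\bigcap_j\Gamma_j$) identifies the $k[\hbar]$-module $F_kA/F_{k-1}A$ with $S^k(V)[\hbar]$ directly; Definition \ref{pbwd1} only concerns the isomorphism type of these quotients, not surjectivity of a chosen section. As written, your proof of (1) is therefore incomplete, and since your part (2) is obtained by applying part (1) to $A/\Theta$, the gap propagates there as well.

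Two further remarks. First, your argument for part (1) uses finite-dimensionality of $V$ in an essential way (finite generation of $F_NA$, the structure theorem, rank counts), whereas the theorem imposes that hypothesis only in part (2); the paper's proof of (1) is purely filtration-theoretic and does not need it. Second, on the positive side, your reduction in part (2) --- pass to $A/\Theta$ with $\Theta=\bigcap_\ell\Gamma_\ell$, observe that $\gr_\Gamma(A/\Theta)=\gr_\Gamma A$ and that the induced descending filtration is separated, apply part (1), and control $\Theta$ through the finitely generated torsion modules $\Theta\cap F_NA$, whose supports accumulate to a countable set $\mathscr{S}$ --- is a clean alternative to the paper's direct analysis of $I\cap\gr^p_FA$ via the structure theorem and a $\Tor$-vanishing argument, and it would go through once part (1) is repaired (for quotients of algebras of type \eqref{eq1} by ideals contained in all $\Gamma_k$). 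The repair, however, requires the missing ingredient above, e.g.\ the paper's bidegree-wise use of both filtrations.
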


\begin{remark}{\rm
Equation \eqref{s1} always holds when all $\varphi_{ij}$ are linear in $x_i$'s, see Lemma \ref{pbwlemma} below. If $\{\varphi_{ij}\}$ are quadratic in $x_i$'s, \eqref{s1} is not true, in general. See Section \ref{exx} below, where such phenomenon is shown for Etingof-Ginzburg algebras with cyclic cubic potential.
}
\end{remark}

\subsection{\sc Proof of Theorem \ref{main1}}

\subsubsection{\sc Lemma on two filtrations}
\begin{lemma}\label{l2f}
Let $R$ be a ring, and $L$ an $R$-module. Suppose that $L$ is endowed with an ascending filtration $F$, and with a descending filtration $\Gamma$.
Then the filtration $F$ induces an ascending filtration on each consecutive quotient $\gr_\Gamma^i L$, and the filtration $\Gamma$ induces a descending filtration on each consecutive quotient $\gr_\Gamma^j L$. Their consecutive quotients are canonically isomorphic:
$$
\gr_\Gamma^i\gr_F^jL\simeq \gr_F^j\gr_\Gamma^i L
$$
\end{lemma}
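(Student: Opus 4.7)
The plan is to exhibit both iterated graded pieces as canonically isomorphic to a single common subquotient of $L$, namely the vector space
\[
Q^{i,j} \;\defeq\; \frac{F_j L \cap \Gamma_i L}{(F_{j-1} L \cap \Gamma_i L) + (F_j L \cap \Gamma_{i+1} L)}.
\]
Since $Q^{i,j}$ is manifestly symmetric in the two filtrations (swapping the roles of $F$ and $\Gamma$ does nothing to the expression), the desired canonical isomorphism $\gr_\Gamma^i \gr_F^j L \simeq \gr_F^j \gr_\Gamma^i L$ will follow at once.

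First I would write down the induced filtrations explicitly. On $\gr_F^j L = F_j L / F_{j-1} L$ one sets
\[
\Gamma_i \bigl(\gr_F^j L\bigr) \;\defeq\; \frac{(\Gamma_i L \cap F_j L) + F_{j-1} L}{F_{j-1} L},
\]
and symmetrically $F_j(\gr_\Gamma^i L)$ is the image of $F_j L \cap \Gamma_i L$ in $\Gamma_i L / \Gamma_{i+1} L$. Both are well-defined $R$-submodule filtrations (descending, respectively ascending), so the associated graded modules make sense.

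Next I would compute, for instance, $\gr_\Gamma^i \gr_F^j L$. By construction it equals
\[
\frac{(\Gamma_i L \cap F_j L) + F_{j-1} L}{(\Gamma_{i+1} L \cap F_j L) + F_{j-1} L}.
\]
Here one applies the elementary isomorphism $(A+C)/(B+C) \cong A/\bigl(B + (A \cap C)\bigr)$, valid whenever $B \subset A$, with $A = \Gamma_i L \cap F_j L$, $B = \Gamma_{i+1} L \cap F_j L$, and $C = F_{j-1} L$; using $F_{j-1} L \subset F_j L$ to simplify $A \cap C = \Gamma_i L \cap F_{j-1} L$, one identifies the quotient with $Q^{i,j}$. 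The analogous computation for $\gr_F^j \gr_\Gamma^i L$, running the same argument with the roles of $F$ and $\Gamma$ interchanged, yields exactly the same formula $Q^{i,j}$. Composing the two identifications gives the canonical isomorphism.

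The only potential obstacle is bookkeeping: one must verify that the elementary quotient isomorphism $(A+C)/(B+C) \cong A/(B + A \cap C)$ is induced by the inclusion $A \hookrightarrow A + C$ followed by projection, so that both identifications of $\gr_\Gamma^i \gr_F^j L$ and $\gr_F^j \gr_\Gamma^i L$ with $Q^{i,j}$ come from the same underlying map "represent a class by a lift in $F_j L \cap \Gamma_i L$". This ensures the isomorphism in the statement is truly canonical rather than merely an abstract equality of isomorphism classes; no deeper ingredient is required.
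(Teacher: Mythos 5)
Your proposal is correct and follows exactly the paper's own argument: the paper likewise identifies both iterated graded pieces with the common subquotient $\Gamma_i\cap F_j/\bigl((\Gamma_{i+1}\cap F_j)+(\Gamma_i\cap F_{j-1})\bigr)$, which is your $Q^{i,j}$. You simply spell out the modular-law bookkeeping that the paper leaves as ``clear.''
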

\begin{proof}
It is clear, as both $R$-modules are isomorphic to
$$
\Gamma_i\cap F_j/(\Gamma_{i+1}\cap F_j+\Gamma_i\cap F_{j-1})
$$
\end{proof}
It is clear that the same statement is true when both filtrations are ascending or descending.
\subsubsection{\sc Proof of Theorem \ref{main1}(i)}
We need to prove that, under the assumption of Theorem \ref{main1}, the quotient $F_kA/F_{k-1}A$ is $\hbar$-linearly isomorphic to $S^k(V)[\hbar]$.
This quotient $F_kA/F_{k-1}A$ is filtered by descending filtration $\Gamma$, and by Lemma \ref{l2f}
\begin{equation}\label{x1}
\gr^j_\Gamma (F_kA/F_{k-1}A)\simeq\gr^k_F \Gamma_jA/\Gamma_{j+1}A\simeq\gr^k_F(\hbar^k S(V)[\hbar])\simeq\hbar^j S^k(V)
\end{equation}
as $k[\hbar]$-modules.
(Here in the second equality we used the assumption of Theorem \ref{main1}, on the PBW-like property for the descending filtration $\Gamma$).
As well, we know from these assumptions that $\oplus_{j\ge 0}\gr^j_\Gamma \gr^k_FA\simeq S^k(V)[\hbar]$ is a free $k[\hbar]$-module (for a fixed $k$).
Therefore, the $k[\hbar]$-module $\gr^k_FA/((\cap_j\Gamma_j)\cap \gr^k_FA)$ is also free. It follows that the space of its generators (as a $k[\hbar]$-module) is $\gr^0_\Gamma\gr^k_FA\simeq S^k(V)$, and then $\gr^k_FA/((\cap_j\Gamma_j)\cap\gr^k_FA)$ is isomorphic to the free $k[\hbar]$-module $S^k(V)[\hbar]$.

Now, as $\bigcap_{j\ge 0}\Gamma_j=0$ by assumption of (i), it follows that $F_kA/F_{k-1}A\simeq S^k(V)[\hbar]$, as an $k[\hbar]$-module.

The remaining assertions of Theorem \ref{main1}(i) follow now from Lemma \ref{trivial}.

\qed

\subsubsection{\sc The structure of a finitely-generated $k[\hbar]$-module}
Before proving the part (ii) of Theorem, recall the structure theorem for finitely-generated $k[\hbar]$ (resp., $k[[\hbar]]$) modules.
This result is well-known, and we refer the reader to the textbooks in Algebra for a proof.

\begin{lemma}\label{simple}
\begin{itemize}
\item[(i)] Let $k$ be any field. Any finitely generated $k[[\hbar]]$-module is a direct sum
\begin{equation}
M=M_\free\oplus M_{n_1}\oplus\dots\oplus M_{n_\ell}
\end{equation}
where $M_\free$, is a free $k[[\hbar]]$-module, $n_i\in\mathbb{Z}_{>0}$, and
\begin{equation}
M_n=k[[\hbar]]/\hbar^nk[[\hbar]]
\end{equation}
\item[(ii)] Let $k$ be an algebraically closed field. Any finitely generated $k[\hbar]$-module $M$ is a direct sum
\begin{equation}
M=M_\free \oplus M_{a_1, n_1}\oplus \dots \oplus M_{a_\ell, n_\ell}
\end{equation}
where $M_\free$ is a free $k[\hbar]$-module of a finite rank, $a_i\in k$, $n_i\in \mathbb{Z}_{>0}$, and
\begin{equation}
M_{a,n}=k[\hbar]/(\hbar-a)^nk[\hbar]
\end{equation}
\item[(iii)] When $k$ is not necessarily an algebraically closed field, a finitely generated $k[\hbar]$-module $M$ is a direct sum
\begin{equation}\label{kpl}
M=M_\free\oplus M_{p_1(\hbar),n_1}\oplus\dots\oplus M_{p_\ell(\hbar),n_\ell}
\end{equation}
where $M_\free$ is a free $k[\hbar]$-module, $p_i(\hbar)$ are (monic) irreducible polynomials in $k[\hbar]$, $n_i\in\mathbb{Z}_{>0}$, and
\begin{equation}
M_{p(\hbar),n}=k[\hbar]/p(\hbar)^nk[\hbar]
\end{equation}
\end{itemize}
\end{lemma}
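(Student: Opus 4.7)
The plan is to deduce all three parts from the classical structure theorem for finitely generated modules over a principal ideal domain, applied to $R=k[\hbar]$ or $R=k[[\hbar]]$. For this it suffices to observe that $k[\hbar]$ is a Euclidean domain (with respect to the degree in $\hbar$) and that $k[[\hbar]]$ is a discrete valuation ring with uniformizer $\hbar$; in particular both are PIDs. The classical structure theorem then provides, for any finitely generated $R$-module $M$, a decomposition
$$M\simeq R^r\oplus\bigoplus_{i=1}^\ell R/(p_i^{n_i})$$
with $p_i$ prime elements of $R$ (unique up to units and reordering) and $n_i\in\mathbb{Z}_{>0}$. The free summand $R^r$ is precisely what is denoted $M_\free$ in the statement.

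The three cases then correspond to the three ways of enumerating the primes of $R$. For $R=k[[\hbar]]$ the ring is local with maximal ideal $(\hbar)$, so up to units $\hbar$ is the unique prime; hence the torsion part is a direct sum of modules $k[[\hbar]]/\hbar^n$, which is exactly (i). For $R=k[\hbar]$ with $k$ algebraically closed, every monic irreducible polynomial is linear, of the form $\hbar-a$ with $a\in k$, whence the torsion summands take the form $k[\hbar]/(\hbar-a)^n$, which is (ii). For an arbitrary field $k$, the monic irreducibles may have higher degree, and the decomposition takes the more general shape \eqref{kpl}, giving (iii).

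The only real obstacle is the structure theorem itself, which combines the existence of an invariant-factor decomposition for modules over a PID with the primary decomposition obtained by the Chinese remainder theorem. Since this is standard material covered in any graduate algebra text, it is appropriate to invoke it as a black box and not reproduce the argument.
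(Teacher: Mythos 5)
Your proposal is correct and takes essentially the same route as the paper, which likewise invokes the standard structure theorem for finitely generated modules over a PID (noting that (ii) is just the Jordan normal form theorem) and does not reproduce the argument. Your identification of the relevant primes in each of the three cases ($\hbar$ up to units for the DVR $k[[\hbar]]$, linear monic irreducibles over an algebraically closed field, arbitrary monic irreducibles in general) is exactly the content the paper leaves implicit.
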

\begin{proof}
It is standard. Notice that the statement (ii) is nothing but the Jordan normal form theorem, for the case of an algebraically closed field, and the statement (iii) is its direct analogue for general (not necessarily algebraically closed) fields. The modules $M_n$ are simple $k[[\hbar]]$-modules, as well as the modules $M_{a,n}$ (corresp., $M_{p(\hbar),n}$ for irreducible $p(\hbar)$) are free $k[\hbar]$-modules. Together with the 1-dimensional free module, they exhaust the list of finitely-generate simple modules over the corresponding commutative $k$-algebras ($k[[\hbar]]$ and $k[\hbar]$). \end{proof}

\comment

As well, one has:

\begin{lemma}\label{simple2}
Let $M$ be a finitely generated $k[\hbar]$-module, and in notations of Lemma \ref{simple}(iii), one has:
\begin{equation}
M=M_\free\oplus M_{p_1(\hbar), n_1}\oplus\dots\oplus M_{p_\ell(\hbar), n_\ell}
\end{equation}
with
\begin{equation}
M_\free =k[\hbar]^{\oplus N}
\end{equation}
Then the following statements are true:
\begin{itemize}
\item[(i)]
Let $a\in k$ be not a root of either of polynomials $p_i(\hbar)$. Then the specialization of the $k[\hbar]$-module $M$ at $a\in k$ is a $k$-vector space of dimension $N$.
\item[(ii)]
When $a\in k$ is a root of some of polynomials $p_i(\hbar)$, the specialization $M|_{\hbar=a}$ is
\begin{equation}
M|_{\hbar=a}=k^{\oplus N}\bigoplus_{\substack{1\le j\le \ell\\ p_j(\hbar)=\hbar-a}}k^{\oplus n_j}
\end{equation}
\item[(iii)]
The completion
$\hat{M}=M\otimes_{k[\hbar]}k[[\hbar]]$ is
\begin{equation}
\hat{M}=k[[\hbar]]^{\oplus N}\bigoplus_{\substack{1\le j\le \ell\\ p_j(\hbar)=\hbar}}M_{n_j}
\end{equation}
\item[(iv)]
The functor $M\mapsto \hat{M}$ is exact.
\end{itemize}
\end{lemma}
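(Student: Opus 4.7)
My plan is to reduce everything to a summand-by-summand analysis via the structure theorem (Lemma \ref{simple}(iii)). Writing $M = M_\free \oplus \bigoplus_j M_{p_j(\hbar), n_j}$, both functors $(-)\otimes_{k[\hbar]} k[\hbar]/(\hbar-a)$ and $(-)\otimes_{k[\hbar]} k[[\hbar]]$ commute with finite direct sums, so it suffices to compute them on each cyclic summand separately and then add up.

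For (i) and (ii), I compute $k[\hbar]/p(\hbar)^n \otimes_{k[\hbar]} k[\hbar]/(\hbar-a) = k[\hbar]/\bigl(p(\hbar)^n,\hbar-a\bigr)$. When $p$ is monic irreducible and $p(a) \ne 0$, the ideals $(p(\hbar)^n)$ and $(\hbar-a)$ are coprime in $k[\hbar]$, so their sum is the unit ideal and the summand vanishes. When $p(\hbar) = \hbar - a$ the quotient collapses to $k[\hbar]/(\hbar-a) = k$; the free summand always contributes $k^{\oplus N}$. Summing these settles (i) immediately. For (ii), I would read the displayed $k^{\oplus n_j}$ as the underlying $k$-vector space dimension of the primary block $M_{\hbar-a,n_j}$, whose obvious basis $1,(\hbar-a),\ldots,(\hbar-a)^{n_j-1}$ is $n_j$-dimensional over $k$.

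For (iii), the same decomposition gives $k[\hbar]/p(\hbar)^n \otimes_{k[\hbar]} k[[\hbar]] = k[[\hbar]]/p(\hbar)^n k[[\hbar]]$. If $p(0) \ne 0$, then $p(\hbar)$ has invertible constant term and is therefore a unit in $k[[\hbar]]$, killing the summand; if $p(\hbar) = \hbar$, the summand is $k[[\hbar]]/\hbar^n = M_n$. Together with $k[\hbar]^{\oplus N} \otimes k[[\hbar]] = k[[\hbar]]^{\oplus N}$ this produces the stated formula. For (iv), I would invoke the standard fact that $k[[\hbar]]$ is a flat $k[\hbar]$-module: $k[\hbar]$ is Noetherian and the $\hbar$-adic completion of a Noetherian ring is flat over it. Flatness is equivalent to exactness of the tensor product, which is precisely (iv).

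The main obstacle I anticipate is making sense of the displayed formula in (ii): the naive tensor-product specialization returns only one copy of $k$ per $(\hbar-a)$-primary summand, not $k^{\oplus n_j}$. Either the statement must be read with the alternative convention sketched above (total $k$-dimension of the $(\hbar-a)$-primary torsion together with the classical fiber of the free part), or a different specialization functor is intended. Once this convention is pinned down, each of (i)--(iv) collapses to a one-line computation on each summand.
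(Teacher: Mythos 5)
Your proposal is correct and is essentially the paper's own (largely omitted) argument: the paper's proof just declares (i)--(iii) ``clear'' --- the implicit content being exactly your summand-by-summand computation of $-\otimes_{k[\hbar]}k[\hbar]/(\hbar-a)$ and $-\otimes_{k[\hbar]}k[[\hbar]]$ on each cyclic block, via coprimality of $(p(\hbar)^{n})$ and $(\hbar-a)$ when $p(a)\neq 0$, resp.\ invertibility of $p(\hbar)$ in $k[[\hbar]]$ when $p(0)\neq 0$ --- and for (iv) it says the claim is clear from (iii) and cites the general exactness-of-completion theorem in Matsumura, which is the same fact as the flatness of $k[[\hbar]]$ over the Noetherian ring $k[\hbar]$ that you invoke. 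Your caveat about (ii) is well taken and is not addressed by the paper's one-line proof: with the paper's definition of specialization, each block $k[\hbar]/(\hbar-a)^{n_j}$ contributes a single copy of $k$ rather than $k^{\oplus n_j}$, so the displayed formula is only correct under the reinterpretation you propose (total $k$-dimension of the $(\hbar-a)$-primary part) --- a discrepancy consistent with the fact that this lemma survives only in a commented-out portion of the source and is not used in the published argument.
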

\begin{proof}
The first three statements are clear. The fourth one is clear in our situation from (iii), but it is a particular case of a more general statement, see [M, Theorem 8.1(ii)].
\end{proof}
\endcomment

\subsubsection{\sc Proof of Theorem \ref{main1}(ii)}
The same argument as in the proof of Theorem \ref{main1}(i) shows that, in general case,
\begin{equation}
\gr^p_F(A)/(\gr^p_F(A)\cap I)\simeq S^p(V)[\hbar]
\end{equation}
for any $k\ge 0$, where
\begin{equation}
I=\bigcap_{\ell\ge 0}\Gamma_\ell(A)
\end{equation}

We consider $I\cap \gr^p_F(A)$ as a $k[\hbar]$-module. It is finitely generated for any fixed $p$, as $F_p(A)$ is generated by (a finite number) of monomials in $x_1,\dots,x_n$ of total degree $\le p$. (Here we use essentially that $\dim V<\infty$, which is an assumption in (ii); otherwise $F_p$ would fail to be a finitely generated $k[\hbar]$-module).

Therefore, Lemma \ref{simple}(iii) is applicable to $M=I\cap \gr^p_F(A)$. We want to prove that the free component $M_\free$ in \eqref{kpl} is 0.
It is enough to prove that the specialization $M_0$ of $M$ at $\hbar=0$ is 0. We have: $M_0=M\otimes_{k[\hbar]}(k[\hbar]/(\hbar))$.
It is 0, as $I\otimes_{k[\hbar]}(k[\hbar]/(\hbar))=(\hbar I)\otimes_{k[\hbar]}(k[\hbar]/(\hbar))=0$ (we use $I=\hbar I$).

Therefore, for any fixed $p$,
\begin{equation}
M=I\cap \gr^p_F(A)=M_{p_1(\hbar),n_1}\oplus\dots\oplus M_{p_\ell(\hbar),n_\ell}
\end{equation}
for finite $\ell$.

Its specialization $M_s=0$ for any $s\in k$ which is not a root of either of polynomials $p_k(\hbar)$. Denote by $\mathscr{S}$ the set of $s\in k$ such that $p_i(s)=0$ for some $i$.

For any $t\in k\backslash \mathscr{S}$ one has:
\begin{equation}
\left(\gr^p_F(A)/(I\cap \gr^p_F(A))\right)_t=(\gr^p_F(A))_t/(I\cap \gr^p_F(A))_t=(\gr^p_F(A))_t
\end{equation}
as $\gr^p_F(A)/(I\cap \gr^p_F(A))\simeq S^p(V)[\hbar]$ is a free $k[\hbar]$-module, and the corresponding $\mathrm{Tor}_{k[\hbar]}^1=0$.

We obtain that for any $t\in k\backslash \mathscr{S}S$
\begin{equation}
\gr_F^p(A)_t\simeq S^p(V)[\hbar]/(\hbar-t)\simeq S^p(V)
\end{equation}

What only remains is to prove that $(F_p(A)/F_{p-1}(A))_t\simeq (F_p(A))_t/(F_{p-1}(A))_t$ for any $t\in k\backslash \mathscr{S}$.
We apply once again Lemma \ref{simple}(iii), which says that
$$
F_p(A)/F_{p-1}(A)=S^p(V)[\hbar]\bigoplus (\oplus M_{\theta_i(\hbar),m_i})
$$
where the second summand has support in $\mathscr{S}$. The corresponding $\mathrm{Tor}_{k[\hbar]}^1=0$, and we are done.

\qed

\section{\sc A way to prove the descending filtration PBW property}
Here we formulate and prove an effective tool allowing (in some cases) to prove that the assumption of Definition \ref{pbwd2} (and Theorem \ref{main1}) is fulfilled. That is, we establish a way to prove that $\oplus_{j\ge 0}\Gamma_jA/\Gamma_{j+1}A\simeq S(V)[\hbar]$ as a graded $k[\hbar]$-algebra.

\subsection{\sc The Koszul free resolution of the algebras of polynomials}
Here we recall the construction of a free dg resolution of the polynomial algebra $S(V)$, called {\it the Koszul resolution}. The vector space $V$ is either finite-dimensional over $k$, or graded with finite-dimensional graded components.

Denote by $\Lambda^-(V)$ the cofree commutative cocommutative coalgebra without counit on the space $V[1]$ (concentrated in degree -1 if $V$ is in degree 0).
That is, as a vector space, $\Lambda^-(V)=S^+(V[1])$, and the coproduct is
\begin{equation}\label{res1}
\begin{aligned}
\ &\Delta(v_1\wedge\dots\wedge v_\ell)=\sum_{\substack{\ell=a+b\\ a,b>0}}\sum_{\substack{(a,b)\text{-shuffles }\\ \sigma\in\Sigma_\ell}}(-1)^{\sharp\sigma}(v_{\sigma_1}\wedge\dots\wedge v_{\sigma_a})\bigotimes (v_{\sigma_{a+1}}\wedge\dots\wedge v_{\sigma_\ell})
\end{aligned}
\end{equation}
Now shift $\Lambda^-(V)$ by 1 to the right, and take the tensor algebra of this graded vector space:
\begin{equation}\label{res2}
\mathcal{R}^\udot=T(\Lambda^-[-1])
\end{equation}
Introduce a differential $d$ in $\mathcal{R}^\udot$ (as in a dg algebra, satisfying the Leibniz rule with signs), defining it on generators $(v_{i_1}\wedge\dots\wedge v_{i_k})[-1]$ by
\begin{equation}\label{res1bis}
d((v_{i_1}\wedge\dots\wedge v_{i_k})[-1])=\sum_i(\Delta^1_i(v_{i_1}\wedge\dots\wedge v_{i_k}))[-1]\otimes (\Delta^2_i(v_{i_1}\wedge\dots\wedge v_{i_k}))[-1]
\end{equation}
where
\begin{equation}
\Delta(\omega)=\sum_i\Delta^1_i(\omega)\otimes \Delta^2_i(\omega)
\end{equation}
is the notation for the coproduct.

The equation $d^2=0$ follows from the coassociativity of $\Delta$.

Choose a basis $\{x_1,\dots,x_n,\dots\}$ in $V$. Denote by $\xi_1,\dots,\xi_n,\dots$ the corresponding elements (the cogenerators) in $\Lambda^-(V)$. Then $\deg\xi_i=-1$. After the shift by 1, $\Lambda^-(V)[-1]$, we use the notations $x_i$ for $\xi_i[-1]\in\mathcal{R}^0$.
Denote $\xi_{ij}=\xi_i\wedge \xi_j\in (\mathcal{R})^{-1}$. Then \eqref{res1bis} gives
\begin{equation}
dx_i=0
\end{equation}
\begin{equation}
d(\xi_{ij})=x_i\otimes x_j-x_j\otimes x_i
\end{equation}
A general element in $(\mathcal{R})^{-1}$ is a sum of the following elements $\omega\in T(x_1,\dots,x_n,\dots)\otimes \xi_{ij}\otimes T(x_1,\dots,x_n,\dots)$.

By Leibniz rule: $d(\omega)\in T(V)\otimes (x_i\otimes x_j-x_j\otimes x_i)\otimes T(V)$.

Thus the image $d(\mathcal{R}^{\udot})\subset \mathcal{R}^0$ is the two-sided ideal generated by $x_i\otimes x_j-x_j\otimes x_i$.
Therefore,
\begin{equation}
H^0\mathcal{R}^\udot=S(V)
\end{equation}
as an algebra.

In fact, the higher cohomology of $\mathcal{R}^\udot$ vanish.
\begin{lemma}
The higher cohomology $H^k\mathcal{R}^\udot=0$ for any $k\le -1$.
\end{lemma}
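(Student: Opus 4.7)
The plan is to decompose $\mathcal{R}^\udot$ by an auxiliary \emph{weight} grading: set $\mathrm{wt}(\xi_{i_1\cdots i_k}[-1]) = k$ and extend multiplicatively to tensor products. Since the shuffle coproduct \eqref{res1} splits a length-$\ell$ wedge into pairs of wedges whose lengths sum to $\ell$, the differential $d$ preserves this grading, and so $\mathcal{R}^\udot = \bigoplus_{w \ge 0} \mathcal{R}^{\udot, w}$ as a direct sum of subcomplexes. For each fixed $w \ge 1$, the complex $\mathcal{R}^{\udot, w}$ is finite, supported in cohomological degrees $-(w-1), \ldots, 0$, with extreme terms $\mathcal{R}^{0, w} = V^{\otimes w}$ and $\mathcal{R}^{-(w-1), w} = \Lambda^w V$. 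It therefore suffices to establish the vanishing $H^{-k}(\mathcal{R}^{\udot, w}) = 0$ for every $k \ge 1$ and $w \ge 1$; the case $w=1$ is trivial, and the case $w=2$ reduces to the injectivity of the antisymmetrization map $\Lambda^2 V \hookrightarrow V^{\otimes 2}$.

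For general $w$, the cleanest identification is to recognize $\mathcal{R}^\udot$ as the cobar construction $\Cobar(\Lambda(V))$ of the coaugmented cofree cocommutative coalgebra $\Lambda(V) = k \oplus \Lambda^-(V)$ on $V[1]$. Under this identification, the canonical augmentation $\mathcal{R}^\udot \to S(V)$ already available in degree $0$ becomes the natural bar--cobar map, and the claim that it is a quasi-isomorphism is one of the equivalent formulations of Koszulity of the quadratic algebra $S(V)$. A self-contained proof proceeds by constructing an explicit contracting homotopy $h \colon \mathcal{R}^{-k, w} \to \mathcal{R}^{-k-1, w}$ with $dh + hd = \id$ in negative degrees: fix a basis vector $x_1 \in V$, locate in a given tensor monomial the leftmost factor $\xi_{I_j}[-1]$ whose multi-index contains the symbol $1$, and reverse one step of the shuffle coproduct at that position by merging $\xi_{I_{j-1}}[-1] \otimes \xi_{I_j}[-1]$ into a single wedge factor of length $|I_{j-1}| + |I_j|$ (suitably antisymmetrized). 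Monomials in which $x_1$ never appears are handled inductively in $\dim V$ via a K\"unneth-type decomposition.

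The principal obstacle is the sign and combinatorial bookkeeping in the verification $dh + hd = \id$: the shuffle coproduct \eqref{res1} on high-weight generators produces many terms, and the graded Leibniz rule together with the shifts $[-1]$ contributes signs that must be tracked with care. Once $\mathcal{R}^\udot$ has been identified with $\Cobar(\Lambda(V))$, however, the acyclicity in negative degrees is formal, being the classical bar--cobar consequence of the Koszul property of the polynomial algebra $S(V)$.
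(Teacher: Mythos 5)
Your fallback argument is essentially the paper's own: the paper proves this Lemma by identifying $\mathcal{R}^\udot$ with the cobar-type resolution attached to the Koszul dual coalgebra of $S(V)$ and citing the standard theory of Koszul algebras ([PP], [BGS]), which is exactly your closing step ``$\mathcal{R}^\udot=\Cobar(\Lambda(V))$ and the map to $S(V)$ is a quasi-isomorphism because $S(V)$ is Koszul''. Your preliminary reduction by the weight grading (total number of $\xi$-indices), which splits $\mathcal{R}^\udot$ into finite subcomplexes with $\mathcal{R}^{0,w}=V^{\otimes w}$ and $\mathcal{R}^{-(w-1),w}=\Lambda^w V$, is a correct and useful observation, compatible with the paper's setup since $d$ preserves it.

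However, the ``self-contained'' contracting homotopy you sketch should not be regarded as a proof, and you should be aware that it is not a matter of sign bookkeeping only. As stated, $h$ is undefined when the leftmost factor whose multi-index contains $1$ is the \emph{first} tensor factor (there is no $\xi_{I_{j-1}}$ to merge with); when $I_{j-1}$ and $I_j$ share an index other than $1$ the merged wedge is zero, and it is then unclear which terms of $dh+hd$ are supposed to produce the identity on such monomials; and no verification of $dh+hd=\id$ in negative degrees is attempted. This is not a removable technicality: acyclicity of $\Cobar$ of the quadratic dual coalgebra in negative degrees is \emph{equivalent} to Koszulity of the algebra, so one cannot expect a formal, structure-free homotopy — any correct explicit homotopy must encode the Koszulity of $S(V)$ (e.g.\ via comparison with the classical Koszul complex $S(V)\otimes\Lambda(V)$ and a filtration/spectral-sequence argument, which is how the cited references proceed). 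The inductive reduction in $\dim V$ for monomials not involving $x_1$ is fine (they form a subcomplex isomorphic to $\mathcal{R}^\udot$ of the smaller space), but the main case is exactly the one left open. In short: keep the weight decomposition and the citation to Koszul duality (this matches the paper), and either drop the homotopy sketch or replace it by the standard comparison with the Koszul complex.
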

It is standard in theory of Koszul algebras, see e.g. [PP] or [BGS]. In fact, one can construct a free ``Koszul'' resolution of any Koszul algebra $A$, which coincides with the one we just described for the case $A=S(V)$.

We will often use the formula for $d(\xi_{ijk})$ where $\xi_{ijk}=\xi_i\wedge \xi_j\wedge \xi_k \in\mathcal{R}^{-2}$.
It can be easily seen from \eqref{res2} that
\begin{equation}
d(\xi_{ijk})=[x_i,\xi_{jk}]+[x_j,\xi_{ki}]+[x_k,\xi_{ij}]
\end{equation}
Then applying $d$ once again, we see that
\begin{equation}
d^2(\xi_{ijk})=[x_i,[x_j,x_k]]+[x_j,[x_k,x_i]]+[x_k,[x_i,x_j]]
\end{equation}
which is 0 by the Jacobi identity.

\comment
In general, an associative algebra $A$ may have only ``very big'' quasi-free (cofibrant) resolutions $R$, like $R=\Cobar(\Bar(A_+)^-)$.
Here $A_+$ is the kernel of the augmentation map, and an augmentation map $\varepsilon\colon A\to k$ is a necessary data for construction of such resolution. It should obey the following property. Denote $A_+=\Ker \varepsilon$, and $F_n=A_+*\dots *A_+\subset A_+$ ($n$ of factors $A_+$, $*$ is the product in $A$). The property reads: $\cap_{n\ge 0}F_n=0$.

In a special case when the associative algebra $A$ is Koszul, it is possible to find a resolution $R_K(A)$ which is ``much smaller'', we call it {\it the Koszul free resolution} of $A$.
Here we recall the construction of this resolution and consider in more detail the case $A=S(V)$ which is of paramount importance for the sequel.

We refer the reader to [PP, Chapter 2] or [BGS, Section 2] for
definition and for the main properties of quadratic and Koszul algebras.

Let $A$ be a quadratic Koszul algebra, and let
\begin{equation}\label{cohk}
A^!=\oplus_{i\ge
0}\Ext^i_{\Mod(A)}(k,k)
\end{equation}
be the Koszul dual algebra. We refer to the algebra $A^!$ as the {\it cohomological} Koszul dual algebra.
We also consider the algebra
\begin{equation}\label{clask}
{A}^{!}_\mathrm{class}=\oplus_{i\ge
0}\Ext^i_{\Mod(A)}(k,k)[-i]
\end{equation}
(with the shifts of cohomological
degree),
and refer to it as the {\it classical} Koszul dual to $A$ algebra. When $A$ is concentrated in cohomological degree 0, the classical Koszul dual algebra $A^!_{\mathrm{class}}$ also is.

Denote
${A}^{!+}=\oplus_{i>0}\Ext^i_{\Mod(A)}(k,k)$. Then ${A}^{!+}$ is a non-unital $k$-algebra.

Suppose all graded components of ${A}^{!}$ are
finite-dimensional (this is always the case when the quadratic Koszul generators $V$ for $A$ form a finite-dimensional $k$-vector space). Then the associative non-counital {\it coalgebra}
$({A}^{!+})^*$ is well-defined, with the dual space taken grading-wise. Denote
\begin{equation}\label{v3.1}
R^\udot(A)=\Cobar(({A}^{!+})^*)
\end{equation}

The following standard Lemma is very important for our paper. We recall its proof for completeness.
\begin{lemma}\label{lemmares}
In the above notations, $R^\udot(A)$ is a free $\mathbb{Z}_{\le 0}$-graded dg algebra over $k$, endowed with the natural quasi-isomorphic surjection
$p\colon R^\udot(A)\to A$. That is, $R^\udot(A)$ is a free dg resolution of algebra $A$.
\end{lemma}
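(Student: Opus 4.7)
The statement asserts three things about $R^\udot(A)=\Cobar((A^{!+})^*)$: freeness as a graded algebra, concentration in cohomological degrees $\le 0$, and the existence of a quasi-isomorphic surjection $p\colon R^\udot(A)\to A$. I will handle freeness and the grading first, then construct $p$ and compute $H^0$, then attack acyclicity in negative degrees, where Koszulness of $A$ enters decisively.

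\textbf{Freeness, grading, and the degree-zero surjection.} By definition $R^\udot(A)=T(((A^{!+})^*)[-1])$ as a graded algebra, hence free. Since $\Ext^i_{\Mod(A)}(k,k)$ lives in cohomological degree $i\ge 1$, its graded-linear dual $(A^!_i)^*$ sits in degree $-i$, and after the right-shift $[-1]$ in degree $-i+1\le 0$; tensor powers preserve non-positivity, so $R^\udot(A)$ is $\mathbb{Z}_{\le 0}$-graded. The Cobar differential is the unique derivation extending the dual reduced coproduct $\bar\Delta^*$ on the generating subspace; $d^2=0$ follows from coassociativity of $(A^{!+})^*$, itself dual to the associative non-unital product on $A^{!+}$. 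Under Koszul duality $A^!_1\cong V^*$, so $(A^!_1)^*\cong V$ and $(R^\udot(A))^0=T(V)$. I then define $p$ as the canonical projection $T(V)\twoheadrightarrow A=T(V)/(R)$ on the degree-zero piece and zero on $(R^\udot(A))^{<0}$.

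\textbf{$H^0=A$ and compatibility with $d$.} The length-one generators in $(R^\udot(A))^{-1}$ come from $(A^!_2)^*[-1]$. Koszul duality identifies $(A^!_2)^*$ with the subspace $R\subset V\otimes V$ of defining quadratic relations of $A$, because $A^!_2=(V^*)^{\otimes 2}/R^\perp$ by construction and duality reverses this quotient to the inclusion $R\hookrightarrow V\otimes V$. The Cobar differential sends such a generator precisely to its image under this inclusion in $T(V)=(R^\udot(A))^0$, and on a bar-word of length $\ge 2$ it lowers bar-length without leaving $(R^\udot(A))^{<0}$. Hence $d((R^\udot(A))^{-1})$ is the two-sided ideal $(R)\subset T(V)$, which gives both $p\circ d=0$ (so $p$ is a well-defined dg algebra map) and $H^0(R^\udot(A))=T(V)/(R)=A$; surjectivity is automatic since $V$ already generates $A$.

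\textbf{Acyclicity in negative degrees and the main obstacle.} The decisive point is $H^{-k}(R^\udot(A))=0$ for $k\ge 1$. The complex carries an additional internal weight grading by total degree in $V$; in each weight $n\ge 1$ it is finite-dimensional and, up to linear duality, coincides with the weight-$n$ component of the reduced bar complex $\Bar(A^{!+})$ of the Koszul-dual algebra. Koszulness of $A$ is, by definition, the statement that $\Ext^{*}_A(k,k)$ is concentrated on the diagonal $\{\text{cohomological degree}=\text{internal degree}\}$, equivalently that $H^{*}(\Bar(A^{!+}))$ is concentrated in bar-length equal to weight; dualizing weight by weight gives the required acyclicity of $R^\udot(A)$ outside cohomological degree zero. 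This is the content of the standard Koszul-duality theorem as stated in [PP, Ch.~2] or [BGS, Sec.~2]. The real obstacle is precisely this identification of each weight-piece of $R^\udot(A)$ with the linear dual of the corresponding weight-piece of $\Bar(A^{!+})$ as a complex, with matching signs and with the two gradings (cohomological and internal) carefully tracked; once that identification is set up, Koszulness supplies acyclicity for free, and the remaining assertions of the lemma have already been verified in the previous two paragraphs.
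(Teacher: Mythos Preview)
Your proof is correct and follows essentially the same route as the paper's. Both arguments dispose of freeness, the $\mathbb{Z}_{\le 0}$-grading, and the identification $H^0=A$ quickly (the paper says ``one checks immediately''; you spell out the details), and both reduce the vanishing of negative cohomology to Koszulness of $A^!$ via the bar/cobar duality: the paper phrases this as computing $\Ext_{A^!}(k,k)$ with the standard bar resolution of $k$ over $A^!$ and identifying the resulting complex with $R^\udot(A)$, while you phrase it as weight-wise dualizing the reduced bar complex $\Bar(A^{!+})$. One small point: your ``equivalently'' between Koszulness of $A$ and the diagonal property of $H^*(\Bar(A^{!+}))$ tacitly uses that $A$ is Koszul iff $A^!$ is; this is standard and covered by your citations to [PP] and [BGS], but it would not hurt to say so explicitly.
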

\begin{proof}
One checks immediately that 0-th cohomology of $R^\udot(A)$ is canonically isomorphic to $A$ as an algebra.
The vanishing of the higher cohomology is proven as follows.

There is the following general construction of resolutions of modules.
Let $B$ be an associative $k$-algebra with unit, $M$ a left $B$-module. Then the canonical map $B\otimes _k M\to M$, $b\otimes m\mapsto b\cdot m$
is surjective. It can be supplemented to a resolution of $M$ by free $B$-modules. This resolution is:
\begin{equation}\label{classcobar}
\dots\rightarrow (B/k)^{\otimes 2}\otimes B\otimes M\rightarrow B/k\otimes B\otimes M\rightarrow B\otimes M\rightarrow M\rightarrow 0
\end{equation}
The differential in \eqref{classcobar} is the bar-differential, and we omit the explicit formula for it. See ??? for the formula, and for a proof of its acyclicity in higher degrees.

Now apply the resolution \eqref{classcobar} to the case $B={A}^{!}$, $M=k$ the 1-dimensional module, defined out of the augmentation map.
We can compute cohomology $\Ext^\udot_{\Mod({A}^!)}(k,k)$ using this resolution, and the result a priori is the Koszul dual to ${A}^{!}$ algebra $A$ in degree 0, and the cohomology vanish in higher degrees. On the other hand, the explicit computation with resolution $R^\udot(A)$ (considered as a complex of $k$-vector spaces). It proves that the higher cohomology of $R^\udot(A)$ vanish.
\end{proof}

The constructed free resolution of a Koszul algebra $A$ is called {\it free Koszul resolution}, or {\it cobar resolution} of $A$, and is denoted by $R^\udot(A)$, or by $\Cobar^\udot(A)$.

In this paper, we use this construction only for $A=S(V)$. We need to know how the above resolution looks explicitly, so we consider here the simplest non-trivial cases $n=2$ and $n=3$, $n=\dim V$.

(When $V$ is 1-dimensional, the algebra $S(V)=k[x]$ is free as associative algebra, and $R^\udot(k[x])$ coincides with $k[x]$ itself).

\begin{example}\label{n=2}{\rm
Let $\dim_kV=2$, and we write $S(V)$ as $k[x_1,x_2]$. As a graded algebra, ${R}^\udot(k[x_1,x_2])$
is the free associative algebra $T(W)$ generated by a graded vector space $W$ with generators $x_1, x_2$ in degree 0 and with generator $\xi_{12}$ in degree -1. We write
${R}^\udot(k[x_1,x_2])=T(x_1,x_2,\xi_{12})$. The differential on generators is defined as $d(x_1)=d(x_2)=0$,
$d(\xi_{12})=x_1\otimes x_2-x_2\otimes x_1$. It is extended to the entire associative algebra $T(x_1,x_2,\xi_{12})$
by the graded Leibniz rule. The degree 0 component of $T(x_1,x_2,\xi_{12})$ is the tensor
algebra $T(x_1,x_2)$, and the differential restricted on it is 0. A general element of degree -1 is a
non-commutative string in $x_1,x_2,\xi_{12}$, with only one occurrence of $\xi_{12}$. The image of
the differential from degree -1 to degree 0 is the two-sided ideal in the tensor
algebra $T(x_1,x_2)$ generated by $x_1\otimes x_2-x_2\otimes x_1$.
Thus, 0-th cohomology is equal to $k[x_1,x_2]$. It follows from
Lemma \ref{lemmares} that the higher cohomology vanishes.}
\end{example}

\begin{example}\label{n=3}{\rm
Consider the case of the algebra $k[x_1,x_2,x_3]$. Then the underlying graded algebra
${R}^\udot(k[x_1,x_2,x_3])=T(x_1,x_2,x_3,\xi_{12},
\xi_{23},\xi_{13},\xi_{123})$ with $\deg x_i=0$, $\deg \xi_{ij}=-1$,
$\deg\xi_{123}=-2$. The differential is defined on generators as 0 on $x_1,x_2,x_3$,
$d(\xi_{ij})=x_i\otimes x_j-x_j\otimes x_i$, and
$d(\xi_{123})=(x_1\otimes \xi_{23}+x_2\otimes \xi_{31}+x_3\otimes
\xi_{12})+(\xi_{23}\otimes x_1+\xi_{31}\otimes x_2+\xi_{12}\otimes
x_3)$. Here we set $\xi_{ij}=-\xi_{ji}$. Then cohomology in degree 0
is equal to the quotient of the free algebra $T(x_1,x_2,x_3)$ by the
two-sided ideal generated by $x_i\otimes x_j-x_j\otimes x_i$, which
is the algebra $k[x_1,x_2,x_3]$. Again, follows from Lemma \ref{lemmares} that the higher cohomology
vanishes.}
\end{example}
\endcomment

\subsection{\sc A Lemma}
We start with some Lemma which is much weaker than Theorem \ref{lemma21bis} below. Nevertheless, it better explains ``what goes on'', so we decided to keep it here. This Lemma is quoted from [Sh2], whence Theorem \ref{lemma21bis} is new.
\begin{lemma}\label{lemma21}
Let $\mathcal{R}^\udot$ be a $\mathbb{Z}_{\le 0}$-graded complex with
differential $d_0$, such that $H^i(\mathcal{R}^\udot)$ vanishes for
all $i\ne 0$.
\begin{itemize}
\item[(i)] Consider
$\mathcal{R}^\udot_{\hbar}=\mathcal{R}^\udot\otimes k[\hbar]$.
Let $d_\hbar\colon \mathcal{R}^\udot_\hbar\to
\mathcal{R}^{\udot+1}_\hbar$ be an $\hbar$-linear map of degree
+1 $d_\hbar=d_0+\hbar d_1+\hbar^2 d_2+\dots \hbar^n d_n$ (a finite
sum) such that
$$
d_\hbar^2=0
$$
Denote by $H^\udot_\hbar$ the cohomology of the complex
$(\mathcal{R}^\udot_\hbar,d_\hbar)$. Consider the filtration
$\mathcal{R}^\udot_\hbar\supset\hbar\mathcal{R}^\udot_\hbar\supset\hbar^2\mathcal{R}^\udot_\hbar\supset\dots$,
and the induced filtration on $H^\udot_\hbar$: $F_i
H^\udot_\hbar=\mathrm{Im}(i\colon H^\udot(\hbar^i \mathcal{R}^\udot_\hbar,
d_\hbar)\to H^\udot(\mathcal{R}^\udot_\hbar, d_\hbar))$. Then there are
canonical isomorphisms of $k[\hbar]$-modules
$$
F_i H^0_\hbar/F_{i+1}H^0_\hbar\eqto \hbar^i H^0(\mathcal{R}^\udot,
d_0)
$$
and $F_i H^k_\hbar/F_{i+1}H^k_\hbar=0$ for $k<0$,
\item[(ii)] the same statement as in (i), for
$\mathcal{R}^\udot_{[[\hbar]]}=\mathcal{R}^\udot\otimes k[[\hbar]]$,
and $d_\hbar=d_0+\hbar d_1+\hbar^2 d_2+\dots$, possibly an infinite
sum, $d_\hbar^2=0$.
\end{itemize}
\begin{proof}
Consider the short exact sequences of complexes $S_k$:
\begin{equation}\label{lemma21new1}
0\rightarrow \hbar^{k+1}\mathcal{R}^\udot_\hbar\rightarrow
\hbar^k\mathcal{R}^\udot_\hbar\rightarrow
\hbar^{k}\mathcal{R}^\udot_\hbar/\hbar^{k+1}\mathcal{R}^\udot_\hbar\rightarrow
0
\end{equation}
The complex
$\hbar^{k}\mathcal{R}^\udot_\hbar/\hbar^{k+1}\mathcal{R}^\udot_\hbar$
has the differential $d_0$ because all its higher components vanish.
Consider the long exact sequence of cohomology corresponded to this short exact exact sequence of complex.
It has many zero terms, namely
$H^\ell(\hbar^{k}\mathcal{R}^\udot_\hbar/\hbar^{k+1}\mathcal{R}^\udot_\hbar)=0$
for $\ell\le -1$. Then the long exact sequence proves that the
imbedding
$\hbar^{k+1}\mathcal{R}^\udot_\hbar\hookrightarrow\hbar^k\mathcal{R}^\udot_\hbar$
induces an isomorphism on $\ell$-th cohomology for all $\ell\le -1$.
Consider the end fragment of the long exact sequence:
\begin{equation}\label{lemma21new2}
\begin{aligned}
\ &\dots\rightarrow H^1(\hbar^{k+1}\mathcal{R}^\udot_\hbar)\rightarrow
H^1(\hbar^k\mathcal{R}^\udot_\hbar)\rightarrow 0\rightarrow\\
&\rightarrow H^0(\hbar^{k+1}\mathcal{R}^\udot_\hbar)\rightarrow
H^0(\hbar^{k}\mathcal{R}^\udot_\hbar)\rightarrow
H^0(\hbar^{k}\mathcal{R}^\udot_\hbar/\hbar^{k+1}\mathcal{R}^\udot_\hbar)\rightarrow
0
\end{aligned}
\end{equation}
It proves all assertions of Lemma.
\end{proof}
\end{lemma}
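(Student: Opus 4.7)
My approach will be, for each $k\ge 0$, to exploit the short exact sequence of complexes
\[
0\to\hbar^{k+1}\mathcal{R}^\udot_\hbar\to\hbar^k\mathcal{R}^\udot_\hbar\to \hbar^k\mathcal{R}^\udot_\hbar/\hbar^{k+1}\mathcal{R}^\udot_\hbar\to 0
\]
and the associated long exact sequence of cohomology. The crucial preliminary observation is that the induced differential on the quotient is simply $d_0$: for $r\in\mathcal{R}^\udot$ one has $d_\hbar(\hbar^k r)=\hbar^k d_0(r)+\hbar^{k+1}d_1(r)+\ldots$, and every term past the first vanishes modulo $\hbar^{k+1}$. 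Hence as a complex the quotient is isomorphic to $\hbar^k\cdot(\mathcal{R}^\udot,d_0)$, whose cohomology equals $\hbar^k H^0(\mathcal{R}^\udot,d_0)$ in degree $0$ and vanishes in all negative degrees by hypothesis.

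Feeding this into the long exact sequence, the vanishing of $H^\ell$ of the quotient for $\ell\le -1$ forces the inclusion-induced map $H^\ell(\hbar^{k+1}\mathcal{R}^\udot_\hbar)\to H^\ell(\hbar^k\mathcal{R}^\udot_\hbar)$ to be an isomorphism, hence $F_kH^\ell_\hbar=F_{k+1}H^\ell_\hbar$ and the corresponding quotient vanishes as claimed. Because $\mathcal{R}^\udot$ is concentrated in non-positive degrees, the tail of the long exact sequence at $\ell=0$ reduces to the short exact sequence
\[
0\to H^0(\hbar^{k+1}\mathcal{R}^\udot_\hbar)\to H^0(\hbar^k\mathcal{R}^\udot_\hbar)\to \hbar^k H^0(\mathcal{R}^\udot,d_0)\to 0.
\]

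What remains is to identify $F_kH^0_\hbar/F_{k+1}H^0_\hbar$ with the cokernel on the right. This is the one spot that calls for a direct unwinding of definitions: a $d_\hbar$-cycle $x\in\hbar^k\mathcal{R}^\udot_\hbar$ determines a class in $F_k$ that becomes zero modulo $F_{k+1}$ precisely when $x-d_\hbar y\in\hbar^{k+1}\mathcal{R}^\udot_\hbar$ for some $y\in\mathcal{R}^\udot_\hbar$, in which case $x-d_\hbar y$ is itself a cycle in $\hbar^{k+1}\mathcal{R}^\udot_\hbar$ whose class maps to $[x]$. Part (ii) will follow by the identical argument with $k[[\hbar]]$ replacing $k[\hbar]$, since only the $d_0$-term on each associated graded piece matters, regardless of whether the series defining $d_\hbar$ is finite or infinite. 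The main obstacle is really just this final filtration-versus-cohomology bookkeeping; the rest is a formal consequence of the acyclicity of $(\mathcal{R}^\udot,d_0)$ in negative degrees together with the $\mathbb{Z}_{\le 0}$-grading hypothesis.
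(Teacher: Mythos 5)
Your proposal is correct and follows essentially the same route as the paper: the same short exact sequence of complexes, the same observation that the induced differential on $\hbar^{k}\mathcal{R}^\udot_\hbar/\hbar^{k+1}\mathcal{R}^\udot_\hbar$ is $d_0$, and the same long exact sequence with its vanishing terms in negative degrees. The one spot to tighten is the final bookkeeping: your element $y$ need not lie in $\hbar^{k}\mathcal{R}^\udot_\hbar$, but this is closed immediately by noting that the injectivity of each map $H^0(\hbar^{k+1}\mathcal{R}^\udot_\hbar)\to H^0(\hbar^{k}\mathcal{R}^\udot_\hbar)$ in your short exact sequences makes the composite $H^0(\hbar^{k}\mathcal{R}^\udot_\hbar)\to H^0(\mathcal{R}^\udot_\hbar)$ injective, so $F_kH^0_\hbar\cong H^0(\hbar^{k}\mathcal{R}^\udot_\hbar)$ and the isomorphism $F_kH^0_\hbar/F_{k+1}H^0_\hbar\cong\hbar^{k}H^0(\mathcal{R}^\udot,d_0)$ follows directly (the paper is no more explicit on this point).
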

\comment
\begin{remark}{\rm
Unlike in the case of Lemma, when the
complex $(\mathcal{R}^\udot, d_0)$ is $\mathbb{Z}_{\ge 0}$-graded (and
again only $H^0(\mathcal{R},d_0)\ne 0$),  the claim (i)
fails, but (ii) is still true.

By the long exact sequence arguments one needs to prove only the
surjectivity of the map $H^0(\hbar^k \mathcal{R}^\udot_{[[\hbar]]})\to
H^0(\hbar^k \mathcal{R}^\udot_{[[\hbar]]}/\hbar^{k+1}
\mathcal{R}^\udot_{[[\hbar]]})$. The proof (which is true only over
$k[[\hbar]]$) goes as follows.

We consider only the case $k=0$, the case of general $k$ is analogous. Let $x\in H^0(\mathcal{R}^\udot, d_0)$ be a cycle, where $\mathcal{R}^\udot=\mathcal{R}^\udot[[\hbar]]/\hbar\mathcal{R}^\udot[[\hbar]]$. One needs to
lift it to a cycle of the form $x+\hbar x_1+\hbar^2 x_2+\dots$ in
$H^0(\mathcal{R}^\udot_{[[\hbar]]}, d_\hbar)$. We a looking for
$$
x^{(k)}=x+\hbar x_1 +\hbar^2 x_2+\dots +\hbar^k x_k
$$
such that

$$
d_\hbar x^{(k)}=0\ \mathrm{mod} \ \hbar^{k+1}
$$
Note that $d_0((d_\hbar
x^{(k)})_{k+1})=0\ \mathrm{mod} \hbar^{k+1}$, as follows from the fact that
$d_\hbar^2(x^{(k)})=0$.

Now we perform the step of induction. Consider
$(d_\hbar(x^{(k)}))_{k+1}$. It is a $d_0$-cycle, and we can find $x_{k+1}\in
\mathcal{R}^0$ such that $d_0(x_{k+1})=(d_\hbar x^{(k)})_{k+1}$. Set
$$
x^{(k+1)}=x+\hbar x_1+\dots +\hbar^{k+1}x_{k+1}
$$
(because of vanishing of $H^1(\mathcal{R}^\udot)$).

Then we see that $d_\hbar(x^{(k+1)})=0\ \mathrm{mod}\ \hbar^{k+2}$.
}
\end{remark}
\endcomment

\subsection{\sc Second Main Theorem}
Here we generalize Lemma \ref{lemma21} to weaken the assumptions under which the algebra $H^0(\mathcal{R}^\udot_\hbar)$ enjoys the PBW property.
We remark in the proof of Lemma \ref{lemma21} we did not use that the differential $d_0$ can be ``perturbed'' in all degrees. In fact, all we need is to perturb the differential $d_0$ in degrees -1 and -2, such that $d_\hbar^{(-1)}\circ d_\hbar^{(-2)}=0$. The differential component $d_\hbar^{(-1)}$ is encoded in the non-commutative algebra we check for the PBW property, so the only what we need is to construct $d_\hbar^{(-2)}=d_0^{(-2)}+\mathcal{0}(\hbar)$, such that
\begin{equation}
d_\hbar^{(-1)}\circ d_\hbar^{(-2)}=0
\end{equation}

\begin{theorem}\label{lemma21bis}
Let $\mathcal{R}^\udot$ be a $\mathbb{Z}_{\le 0}$-graded complex with
differential $d_0$, such that $H^{-1}(\mathcal{R}^\udot)$ vanishes (without any assumptions on $H^{-2},H^{-3},\dots$).
\begin{itemize}
\item[(i)] Consider
$\mathcal{R}^\udot_{\hbar}=\mathcal{R}^\udot\otimes k[\hbar]$. For $\ell=-1$ and $\ell=-2$,
let $d_\hbar^{(\ell)}\colon \mathcal{R}^\udot_\hbar\to
\mathcal{R}^{\udot+1}_\hbar$ be an $\hbar$-linear map of degree
+1 $d_\hbar^{(\ell)}=d_0^{(\ell)}+\hbar d_1^{(\ell)}+\hbar^2 d_2^{(\ell)}+\dots \hbar^n d_n^{(\ell)}$ (a finite
sum) such that
$$
d_\hbar^{(-1)}\circ d_\hbar^{(-2)}\colon \mathcal{R}^{-2}_\hbar\to\mathcal{R}_\hbar^{0}=0
$$
Denote by $\overline{\mathcal{R}}_\hbar$ the following complex:
\begin{equation}
0\rightarrow \mathcal{R}_\hbar^{-2}\xrightarrow{d_\hbar^{(-2)}}\mathcal{R}_\hbar^{-1}\xrightarrow{d_\hbar^{(-1)}}\mathcal{R}^0_\hbar\rightarrow 0
\end{equation}
and denote by $H^\ell_\hbar$, $\ell=0,-1,-2$, its cohomology.

Consider the filtration
$\overline{\mathcal{R}}_\hbar\supset\hbar\overline{\mathcal{R}}_\hbar\supset\hbar^2\overline{\mathcal{R}}_\hbar\supset\dots$,
and the induced filtration on $H^\ell_\hbar$: $F_i
H^\ell_\hbar=\mathrm{Im}(i\colon H^\ell(\hbar^i \overline{\mathcal{R}}_\hbar,
d_\hbar)\to H^\ell(\overline{\mathcal{R}}^{\ell}_\hbar, d_\hbar))$. Then there are
canonical isomorphisms of $k[\hbar]$-modules
$$
F_i H^0_\hbar/F_{i+1}H^0_\hbar\eqto \hbar^i H^0(\mathcal{R}^\udot,
d_0)
$$
\item[(ii)] the same statement as in (i), for
$\mathcal{R}^\udot_{[[\hbar]]}=\mathcal{R}^\udot\otimes k[[\hbar]]$,
and $d_\hbar^{(\ell)}=d_0^{(\ell)}+\hbar d_1^{(\ell)}+\hbar^2 d_2^{(\ell)}+\dots$, possibly an infinite
sum, $d_\hbar^{(-1)}\circ d_\hbar^{(-2)}=0$.
\end{itemize}
\end{theorem}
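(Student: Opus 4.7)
The plan is to mimic the argument of Lemma \ref{lemma21}, but applied to the truncated three-term complex $\overline{\mathcal{R}}_\hbar$. The essential observation is that both $H^0$ and $H^{-1}$ of any $\mathbb{Z}_{\le 0}$-graded complex depend only on its rightmost three terms, so the weakened acyclicity hypothesis $H^{-1}(\mathcal{R}^\udot,d_0)=0$ is genuinely all one needs, and it suffices to perturb only the two differentials $d_\hbar^{(-1)}$ and $d_\hbar^{(-2)}$.

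First I would set up the short exact sequence of complexes
\begin{equation*}
0\to \hbar^{k+1}\overline{\mathcal{R}}_\hbar\to \hbar^k\overline{\mathcal{R}}_\hbar\to \hbar^k\overline{\mathcal{R}}_\hbar/\hbar^{k+1}\overline{\mathcal{R}}_\hbar\to 0.
\end{equation*}
Each component $d_i^{(\ell)}$ with $i\ge 1$ carries an extra factor of $\hbar$, and therefore dies in the quotient; so the quotient complex carries only the unperturbed differential $d_0$ and is isomorphic to an $\hbar^k$-shifted copy of $\overline{\mathcal{R}}$. Its cohomology in degree $-1$ is $\hbar^k H^{-1}(\mathcal{R}^\udot,d_0)=0$ by hypothesis, and in degree $0$ is $\hbar^k H^0(\mathcal{R}^\udot,d_0)$. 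The corresponding long exact sequence yields
\begin{equation*}
0\to H^0(\hbar^{k+1}\overline{\mathcal{R}}_\hbar)\to H^0(\hbar^k\overline{\mathcal{R}}_\hbar)\to \hbar^k H^0(\mathcal{R}^\udot,d_0)\to 0,
\end{equation*}
bounded on the right by the trivial vanishing of $H^1(\hbar^{k+1}\overline{\mathcal{R}}_\hbar)$.

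The decisive remaining step, and what I expect to be the main obstacle, is to identify $F_k H^0_\hbar$ with $H^0(\hbar^k\overline{\mathcal{R}}_\hbar)$, i.e.\ to show that the natural map $H^0(\hbar^k\overline{\mathcal{R}}_\hbar)\to H^0(\overline{\mathcal{R}}_\hbar)$ is injective. Concretely: given $x\in \hbar^k\mathcal{R}^0_\hbar$ with $x=d_\hbar^{(-1)}(y)$ for some $y\in \mathcal{R}^{-1}_\hbar$, one must produce $y'\in \hbar^k\mathcal{R}^{-1}_\hbar$ with $d_\hbar^{(-1)}(y')=x$. I would handle this by successive approximation: writing $y=y_0+\hbar y_1+\cdots$, the constraint $x\in \hbar^k\mathcal{R}^0_\hbar$ forces $d_0 y_0=0$, whence $y_0=d_0 z_0$ by $H^{-1}(\mathcal{R}^\udot,d_0)=0$; the key identity $d_\hbar^{(-1)}\circ d_\hbar^{(-2)}=0$ then allows one to replace $y$ by $y-d_\hbar^{(-2)}(z_0)\in \hbar\mathcal{R}^{-1}_\hbar$ without changing its boundary. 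Iterating this correction $k$ times produces the desired $y'$, and combined with the short exact sequence above this gives the isomorphism $F_k H^0_\hbar/F_{k+1}H^0_\hbar\eqto \hbar^k H^0(\mathcal{R}^\udot,d_0)$.

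For part (ii) the same argument applies verbatim over $k[[\hbar]]$: the successive-approximation procedure now produces a series which converges in the $\hbar$-adic topology by the completeness of $k[[\hbar]]$, which is the only additional input needed.
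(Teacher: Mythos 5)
Your argument is correct and is essentially the paper's own proof: the same short exact sequence of complexes $0\to\hbar^{k+1}\overline{\mathcal{R}}_\hbar\to\hbar^k\overline{\mathcal{R}}_\hbar\to\hbar^k\overline{\mathcal{R}}_\hbar/\hbar^{k+1}\overline{\mathcal{R}}_\hbar\to 0$, the identification of the quotient with the $d_0$-complex whose $H^{-1}$ vanishes (since $H^{-1}$ and $H^0$ only see degrees $0,-1,-2$), and the resulting long exact sequence. The ``decisive remaining step'' you isolate (injectivity of $H^0(\hbar^k\overline{\mathcal{R}}_\hbar)\to H^0(\overline{\mathcal{R}}_\hbar)$) already follows by composing the injections $H^0(\hbar^{j+1}\overline{\mathcal{R}}_\hbar)\hookrightarrow H^0(\hbar^j\overline{\mathcal{R}}_\hbar)$ provided by that same long exact sequence, so your successive-approximation correction is merely an explicit unwinding of it rather than an additional ingredient.
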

\begin{proof}
The proof is literally the same as for Lemma \ref{lemma21}, as only what we used in it is the vanishing of $H^{-1}(\mathcal{R}^\udot,d_0)$.
Consider the short exact sequence of complexes:
\begin{equation}
0\rightarrow \hbar^{k+1}\overline{\mathcal{R}}_\hbar\rightarrow
\hbar^k\overline{\mathcal{R}}_\hbar\rightarrow
\hbar^{k}\overline{\mathcal{R}}_\hbar/\hbar^{k+1}\overline{\mathcal{R}}_\hbar\rightarrow
0
\end{equation}
Consider the most right fragment of the corresponding long exact sequence in cohomology:
\begin{equation}
\begin{aligned}
\ &\dots\rightarrow H^1(\hbar^{k+1}\overline{\mathcal{R}}_\hbar)\rightarrow
H^1(\hbar^k\overline{\mathcal{R}}_\hbar)\rightarrow 0\rightarrow\\
&\rightarrow H^0(\hbar^{k+1}\overline{\mathcal{R}}_\hbar)\rightarrow
H^0(\hbar^{k}\overline{\mathcal{R}}_\hbar)\rightarrow
H^0(\hbar^{k}\overline{\mathcal{R}}_\hbar/\hbar^{k+1}\overline{\mathcal{R}}_\hbar)\rightarrow
0
\end{aligned}
\end{equation}
In degrees $\ell=0,-1,-2$, $\overline{\mathcal{R}}_\hbar^\ell=\mathcal{R}_\hbar^\ell$. As well, in degree $\ell=-1$ one has
\begin{equation}
H^{-1}(\hbar^k\overline{\mathcal{R}}_\hbar/\hbar^{k+1}\overline{\mathcal{R}}_\hbar)=H^{-1}(\hbar^k\mathcal{R}^\udot_\hbar/\hbar^{k+1}\mathcal{R}^\udot_\hbar)=0
\end{equation}
for any $k\ge 0$, by the assumption. (It is not true for $\ell\le -2$). We are done.
\end{proof}

\subsection{\sc Conclusion}\label{concl}
Let
\begin{equation}\label{con0}
A=T(V)[\hbar]/(x_i\otimes x_j-x_j\otimes x_i-\varphi_{ij})_{1\le i<j\le n}
\end{equation}
be an associative algebra, with
$\varphi_{ij}\in\hbar T(V)[\hbar]$.

Consider the Koszul resolution \eqref{res2} of the polynomial algebra $S(V)$, denote it $\mathcal{R}^\udot$. We can perturb the $\hbar$-linear differential in $\mathcal{R}^\udot_\hbar=\mathcal{R}^\udot[\hbar]$ using the non-commutative polynomials $\varphi_{ij}$.

More precisely, we set
\begin{equation}\label{con1}
d^{(-1)}_\hbar(\xi_{ij})=d^{(-1)}_0(\xi_{ij})-\varphi_{ij}=x_i\otimes x_j-x_j\otimes x_i-\varphi_{ij}
\end{equation}
As $\varphi_{ij}=\mathcal{O}(\hbar)$, we are in the setting of Theorem \ref{lemma21bis}. Only what we need to prove that the condition of Definition \ref{pbwd2} is fulfilled for $A$, it to perturb the differential $d^{(-2)}$, that is, to define
\begin{equation}\label{con2}
d^{(-2)}_\hbar(\xi_{ijk})=d_0^{(-2)}(\xi_{ijk})+\mathcal{O}(\hbar)
\end{equation}
such that
\begin{equation}\label{con3}
d^{(-1)}_\hbar\circ d^{(-2)}_\hbar(\xi_{ijk})=0
\end{equation}
for any $i,j,k$.

We may not care on the higher components of the differential, in the sense that we do not need to perturb them.

If we succeed to construct, for given $\{\varphi_{ij}\}$, the linear map $d_\hbar^{(-2)}$ such that \eqref{con3} holds for any $i,j,k$, we know that Definition \ref{pbwd2} is fulfilled for the algebra $A$ in \eqref{con0}, and we can apply Theorem \ref{main1} to it.

We consider three examples how it works in Section 3.

\section{\sc Examples}\label{section3}

\subsection{\sc The classical Poincar\'{e}-Birkhoff-Witt theorem}\label{section31}
Let $\mathfrak{g}$ be a Lie algebra over a field $k$.
Consider the cocommutative coalgebra $\Lambda^-(\g):=S^+(\g[1])$. The Lie algebra structure on $\g$ defines the Chevalley-Eilenberg differential on $\Lambda^-(\g)$, which makes it a dg coalgebra over $k$.

There is the $\hbar$-linear version of this construction. Consider $\Lambda_\hbar^-(\g):=\Lambda^-(\g)[\hbar]$, as a dg coalgebra.
The coproduct $\Delta\colon \Lambda_\hbar^-(\g)\to S^2_{k[\hbar]}\Lambda^-_\hbar(\g)$ is defined from the coproduct on $\Lambda^-(\g)$ by $k[\hbar]$-linearity. The differential is defined as $d=\hbar d_\g$ where $d_\g$ is the Chevalley-Eilenberg differential in $\Lambda^-(\g)$.

We can now take the tensor algebra over $k[\hbar]$ $T_{k[\hbar]}(\Lambda^-\hbar(\g)[-1])$, it becomes a dg associative algebra.
Namely, the differential is the sum $d_\hbar=d_0+\hbar d_1$, where $d_0$ is defined in \eqref{res1bis} above, and $d_1$ is the Chevalley-Eilenberg differential (extended by the Leibniz rule).

The identity $(d_0+\hbar d_1)^2=0$ follows from the fact that $\Lambda_\hbar^-(\g)$ is an $\hbar$-linear dg coalgebra, as it just was defined.

This construction gives the perturbation of the differential in the Koszul resolution $T(S(\g))[\hbar]$, defined at once in all degrees.
In particular, $$d^{-1}(\xi_{ij})=x_i\otimes x_j-x_j\otimes x_i-\hbar [x_i,x_j]_\g$$
$$
d^{-2}(\xi_{ijk})=\Cycl_{ijk}(x_i\otimes \xi_{jk}-\xi_{jk}\otimes x_i)-\hbar \sum_p\Cycl_{ijk}c_{ij}^p\xi_{pk}
$$
We conclude that the condition of Definition \ref{pbwd2} is fulfilled for the algebra $A$ of type \eqref{eq1} with
$$
\varphi_{ij}=\hbar[x_i,x_j]_\g=\hbar\sum_kc_{ij}^kx_k
$$
This algebra $A$ is, by definition, the quotient-algebra
\begin{equation}
A=T(\g)[\hbar]/(x\otimes y-y\otimes x-\hbar[x,y]_\g)_{x,y\in\g}=\mathcal{U}(\g_\hbar)[\hbar]
\end{equation}

To apply Theorem \ref{main1}(i), we need the following elementary Lemma:

\begin{lemma}\label{pbwlemma}
Let $A$ be an algebra of type \eqref{eq1} such that all $\varphi_{ij}$ be linear in $\{x_i\}$. Then $\bigcap_{\ell\ge 0}\Gamma_\ell(A)=0$.
\end{lemma}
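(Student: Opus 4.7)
The strategy I would use is to endow $A$ with a $\mathbb{Z}_{\ge 0}$-grading in which $\hbar$ has strictly positive degree and all homogeneous components are finite-dimensional; once this is in place, the vanishing $\bigcap_\ell\hbar^\ell A=0$ is a triviality about finite grading expansions rather than a statement about the ring-theoretic structure of $A$.

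Concretely, on $T(V)[\hbar]$ I assign the single weight $\deg x_i=\deg\hbar=1$ to all generators. Since $\dim_k V<\infty$, this turns the free algebra into a connected $\mathbb{Z}_{\ge 0}$-graded $k$-algebra with finite-dimensional homogeneous components. Under the linearity hypothesis, every monomial of $\varphi_{ij}$ has the form $\hbar^a x_k$ with $a\ge 1$ (it is linear in the $x_i$'s and lies in $\hbar T(V)[\hbar]$), and therefore has weight $a+1\ge 2$. In the principal case $\varphi_{ij}=\hbar\sum_k c_{ij}^k x_k$ with scalars $c_{ij}^k\in k$, each defining relation $x_i\otimes x_j-x_j\otimes x_i-\varphi_{ij}$ is homogeneous of weight exactly $2$; the generating ideal $I$ is then graded, and $A=T(V)[\hbar]/I$ inherits a connected $\mathbb{Z}_{\ge 0}$-grading $A=\bigoplus_{n\ge 0}A_n$ with finite-dimensional components.

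Because $\hbar$ has weight $1$, multiplication by $\hbar^\ell$ sends $A_n$ into $A_{n+\ell}$, so $\hbar^\ell A\subset\bigoplus_{n\ge\ell}A_n$. Any $a\in A$ has a \emph{finite} expansion $a=\sum_{n=0}^N a_n$; if $a\in\hbar^\ell A$ for every $\ell$, then choosing $\ell>N$ forces each $a_n=0$, whence $a=0$. This gives $\bigcap_{\ell\ge 0}\Gamma_\ell(A)=0$, which is the claim.

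The one step I would treat as the main obstacle is the fully general $\varphi_{ij}=\sum_k c_{ij}^k(\hbar)x_k$ with $c_{ij}^k(\hbar)\in\hbar k[\hbar]$ possibly carrying higher powers of $\hbar$: then the relations are only of total weight $\ge 2$, not strictly homogeneous, so $I$ is merely filtered rather than graded. To handle this I would pass to the associated weight-filtration $G_n$ on $A$; its associated graded $\gr_G A$ is controlled by the homogeneous weight-$2$ leading parts of the relations and hence reduces to the principal case just treated, so $\bigcap_\ell\hbar^\ell\gr_G A=0$. Since $\hbar$ strictly raises the $G$-weight by $1$ and the $G$-filtration on $A$ is exhaustive with finite-dimensional graded pieces, a standard filtration-lifting argument then transfers this vanishing back to $A$ itself.
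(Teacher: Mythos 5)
Your argument for the principal case $\varphi_{ij}=\hbar\sum_k c_{ij}^k x_k$ with scalar $c_{ij}^k$ is correct and complete: the defining relations are then homogeneous of weight $2$ for $\deg x_i=\deg\hbar=1$, so $A$ inherits a $\mathbb{Z}_{\ge 0}$-grading with $\hbar$ in degree $1$, hence $\hbar^\ell A\subset\bigoplus_{n\ge\ell}A_n$ and the finiteness of homogeneous decompositions forces $\bigcap_{\ell\ge 0}\Gamma_\ell(A)=0$ (finite-dimensionality of the graded pieces is not even needed, and no Jacobi-type consistency is used). The paper offers no argument beyond ``it is clear'', and the only place the lemma is invoked, Section \ref{section31}, is exactly this homogeneous case $\varphi_{ij}=\hbar[x_i,x_j]_{\mathfrak g}$, so your grading argument covers the case the paper actually uses.

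The final paragraph, where you try to handle general linear $\varphi_{ij}=\sum_k c_{ij}^k(\hbar)x_k$ with $c_{ij}^k(\hbar)\in\hbar k[\hbar]$, has a genuine gap. A minor point first: for an ascending weight filtration the leading form of a relation is its \emph{highest}-weight part, i.e.\ the terms with the largest power of $\hbar$, not the weight-$2$ part; to make the weight-$2$ parts the initial forms you must use the descending filtration $G^{(n)}A=$ image of the span of monomials of weight $\ge n$. The serious point is the ``transfer back'': since $\hbar^\ell A\subset G^{(\ell)}A$, what you need is precisely the separatedness $\bigcap_n G^{(n)}A=0$, and separatedness of a filtration can never be read off from its associated graded --- an element of $\bigcap_n G^{(n)}A$ has zero symbol in every $\gr_G$ piece and is invisible there. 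The danger is exactly the phenomenon the paper exhibits in Section \ref{exx}: torsion at a point $\hbar=a\ne 0$, i.e.\ an element $T\ne 0$ with $(1-\hbar)T=0$, whence $T=\hbar T=\hbar^2T=\dots\in\bigcap_\ell\Gamma_\ell$. With mixed powers of $\hbar$ this threat is concrete for linear $\varphi_{ij}$ as well: the identity $[x_1,[x_2,x_3]]+[x_2,[x_3,x_1]]+[x_3,[x_1,x_2]]=0$ turns, after substituting the relations twice, into $\sum_l Q_l(\hbar)x_l=0$ in $A$ with $Q_l\in\hbar^2 k[\hbar]$, and one can arrange all $Q_l$ to be divisible by $1-\hbar$; for instance $[x_1,x_2]=\hbar x_1-\hbar^2x_3$, $[x_2,x_3]=\hbar x_2-\hbar^2x_1$, $[x_3,x_1]=\hbar x_3-\hbar^2x_2$ gives $(1-\hbar)\,\hbar^2(x_1+x_2+x_3)=0$ in $A$, so $\hbar^2(x_1+x_2+x_3)$ lies in $\bigcap_\ell\Gamma_\ell$ unless one shows it is already zero in $A$. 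So in the generality you aim at, the statement requires an argument that rules out torsion of the finitely generated $k[\hbar]$-modules $F_pA$ away from $\hbar=0$ (in the spirit of the structure theory used in Theorem \ref{main1}(ii)), not a formal passage to the associated graded; alternatively, the lemma should be read in (and your proof confined to) the weight-homogeneous case that your main argument settles.
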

\begin{proof}
It is clear.
\end{proof}

Now Theorem \ref{main1}(i) gives the classical Poincar\'{e}-Birkhoff-Witt theorem (in its $\hbar$-linear version). We can specialize it at any $\hbar=a$ as $F_k(A)/F_{k-1}(A)\simeq S^k(V)[\hbar]$ is a free (and therefore a flat) $k[\hbar]$-module. It gives the genuine PBW theorem.

\subsection{\sc The Etingof-Ginzburg algebras with cyclic cubic potential}\label{section32}
Recall some basic definitions on cyclic words.

A {\it cyclic word} in variables $x_1,\dots,x_n$ is an element of the quotient-space $A_n/[A_n,A_n]$, where $A_n=T(x_1,\dots,x_n)$ is the free associative algebra with generators $x_1,\dots, x_n$. A {\it homogeneous of degree $d$} cyclic word in $x_1,\dots, x_n$ is an element of degree $d$ component in $A_n/[A_n,A_n]$, where the degree of all $x_i$ is equal to 1.

Let $\Phi$ be a cyclic word in $x_1,\dots, x_n$. Here we define the partial derivatives $\frac{\partial \Phi}{\partial x_i}$. These derivatives are not cyclic words, but elements of $A_n=T(x_1,\dots, x_n)$. By definition, for a single monomial cyclic word $\Phi$, $\frac{\partial \Phi}{\partial x_i}$ is a sum over all occurrences of $x_i$ in $\Phi$, for any such occurrence, we remove the corresponding $x_i$ from $\Phi$, and cut off the cyclic word $\Phi$ in the place of removed $x_i$. Then extend it to general cyclic words by linearity. One can easily see that the partial derivative $\frac{\partial}{\partial x_i}$ vanish on the commutant $[A_n,A_n]$, and defines an operation
\begin{equation}
\frac{\partial}{\partial x_i}\colon A_n/[A_n,A_n]\to A_n
\end{equation}

Let $\Phi$ be an arbitrary cyclic word in $A=A_3=T(x,y,z)$.
The Etingof-Ginzburg algebra with cyclic potential $\Phi$ is the associative algebra with generators $x,y,z$ and the defining relations
\begin{equation}
\begin{aligned}
\ &x\otimes y-y\otimes x=\frac{\partial \Phi}{\partial z}\\
&y\otimes z-z\otimes y=\frac{\partial\Phi}{\partial x}\\
&z\otimes x-x\otimes z=\frac{\partial\Phi}{\partial y}
\end{aligned}
\end{equation}
We use the [EG] notation $\mathscr{U}(\Phi)$ for these algebras.

The associative algebras $\mathscr{U}(\Phi)$ appeared in [EG] in constructing of ``non-commutative del Pezzo surfaces'' (see loc.cit., Sections 1.2-1.3).

Let $\hbar$ be a polynomial parameter. Consider an expression
\begin{equation}\label{phibar}
\Phi_\hbar=\hbar \Phi_1+{\hbar}^2 \Phi_2+\dots
\end{equation}
which is a polynomial in $\hbar$, all whose coefficients $\Phi_i$ are possibly non-homogeneous cyclic words (without any restriction on the degrees of homogeneity components) in $x,y,z$.

Define the $\hbar$-linear associative algebra $\mathscr{U}_\hbar(\Phi_\hbar)$ as the quotient-algebra of the $\hbar$-linear tensor algebra $T(x,y,z)[\hbar]$ by the two-sided ideal, generated by the following three elements:
\begin{equation}\label{tsi}
\begin{aligned}
\ & [x,y]-\frac{\partial \Phi_\hbar}{\partial z}\\
& [y,z]-\frac{\partial \Phi_\hbar}{\partial x}\\
& [z,x]-\frac{\partial \Phi_\hbar}{\partial y}
\end{aligned}
\end{equation}

\begin{lemma}\label{th3}
Let $\Phi_\hbar$ be as in \eqref{phibar}. Then the associative algebra $\mathscr{U}_\hbar(\Phi_\hbar)$  enjoys the PBW property for the descending filtration $\{\Gamma_\ell\}$ of Definition \eqref{pbwd2}.
\end{lemma}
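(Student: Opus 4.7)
The plan is to apply Theorem \ref{lemma21bis} to the Koszul resolution $\mathcal{R}^\udot$ of the polynomial algebra $S(V)$, with $V=\langle x,y,z\rangle$. Since $\dim V=3$, the generators of $\mathcal{R}^\udot$ consist of $x,y,z$ in degree $0$, three elements $\xi_{xy},\xi_{yz},\xi_{zx}$ in degree $-1$, and a single element $\xi_{xyz}$ in degree $-2$. Following the recipe of Section \ref{concl}, I perturb the differential in degree $-1$ by
\[
d_\hbar^{(-1)}(\xi_{xy}) = [x,y] - \frac{\partial \Phi_\hbar}{\partial z}, \quad d_\hbar^{(-1)}(\xi_{yz}) = [y,z] - \frac{\partial \Phi_\hbar}{\partial x}, \quad d_\hbar^{(-1)}(\xi_{zx}) = [z,x] - \frac{\partial \Phi_\hbar}{\partial y},
\]
extended by the Leibniz rule, so that $H^0$ of the truncated complex of Theorem \ref{lemma21bis} is by construction the Etingof--Ginzburg algebra $\mathscr{U}_\hbar(\Phi_\hbar)$.

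The crucial observation is that \emph{no perturbation of the differential in degree $-2$ is needed}: one may simply take $d_\hbar^{(-2)}=d_0^{(-2)}$. Indeed, a direct computation using the Leibniz rule gives
\[
d_\hbar^{(-1)}\, d_0^{(-2)}(\xi_{xyz}) \;=\; \bigl([x,[y,z]] + [y,[z,x]] + [z,[x,y]]\bigr) \;-\; \sum_{i\in\{x,y,z\}} \Bigl[x_i,\, \frac{\partial \Phi_\hbar}{\partial x_i}\Bigr].
\]
The first summand vanishes by the Jacobi identity for the commutator bracket in any associative algebra. The second summand vanishes by the \emph{cyclic potential identity} $\sum_i [x_i,\partial\Phi/\partial x_i] = 0$ in $T(V)$, valid for any cyclic word $\Phi$.

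I would verify this cyclic identity by a quick telescoping argument: for a monomial cyclic word $\Phi = x_{i_1}\cdots x_{i_n}$, the product $x_j\cdot \partial\Phi/\partial x_j$ is the sum of the cyclic rotations of $\Phi$ starting at each position $k$ with $i_k=j$, while $(\partial\Phi/\partial x_j)\cdot x_j$ is the corresponding sum of rotations starting at position $k+1$; summing over all $j$ sweeps over every position of $\Phi$ and telescopes to $\Phi-\Phi=0$. Linearity then extends the identity to arbitrary cyclic words (and in fact the same argument shows that $\partial/\partial x_i$ descends to $T(V)/[T(V),T(V)]$).

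Once $d_\hbar^{(-1)}\circ d_\hbar^{(-2)}=0$ is established, and given the vanishing of $H^{-1}(\mathcal{R}^\udot,d_0)$ inherent in the Koszul resolution, Theorem \ref{lemma21bis} yields $F_i H^0_\hbar/F_{i+1}H^0_\hbar \simeq \hbar^i S(V)$. Under the identification $H^0_\hbar = \mathscr{U}_\hbar(\Phi_\hbar)$ the filtration $F_\bullet$ here coincides with the $\hbar$-adic descending filtration $\Gamma_\bullet$, so we obtain precisely the isomorphism $\Gamma_\ell/\Gamma_{\ell+1} \simeq \hbar^\ell S(V)$ required by Definition \ref{pbwd2}. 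The principal step, and the whole conceptual content of the argument, is the cyclic potential identity; without it, one would be forced to construct nontrivial $\hbar$-corrections to $d^{(-2)}$ which a priori need not exist, and the strengthening from Lemma \ref{lemma21} to Theorem \ref{lemma21bis} is precisely what makes this minimalistic approach legal.
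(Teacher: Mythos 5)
Your proposal is correct and follows essentially the same route as the paper: you leave $d^{(-2)}$ unperturbed, reduce $d_\hbar^{(-1)}\circ d_0^{(-2)}(\xi_{xyz})$ to the Jacobi identity plus $\sum_i[x_i,\partial\Phi_\hbar/\partial x_i]$, and kill the latter by observing that both halves of the commutator sum enumerate all cuttings of the cyclic word $\Phi_\hbar$, exactly as in the paper's argument, before invoking Theorem \ref{lemma21bis}. The only cosmetic quibble is the word ``telescoping'': the two sums are literally equal (each is the sum of all cyclic rotations), so they cancel outright rather than telescope.
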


\begin{proof}
Accordingly to Theorem \ref{lemma21bis} and discussion in Section \ref{concl} thereafter, it is enough to perturb the $(-2)$-component of the differential in the Koszul resolution $\mathcal{R}^\udot(x_1,x_2,x_2)$ such that the perturbation of the component $d^{(-1)}$ is
\begin{equation}\label{eg}
d_\hbar^{(-1)}(\xi_{ij})=x_i\otimes x_j-x_j\otimes x_i-\frac{\partial}{\partial x_{\overline{i,j}}}\Phi_\hbar
\end{equation}
where we use the notation $\overline{1,2}=3$, $\overline{2,3}=1$, $\overline{3,1}=2$, and assume that $x_{\overline{i,j}}=-x_{\overline{j,i}}$.

That is, the only what we need is to define a perturbation $d_\hbar^{(-2)}(\xi_{ijk})$ such that
\begin{equation}\label{w1}
d_\hbar^{(-1)}\circ d_\hbar^{(-2)}(\xi_{ijk})=0
\end{equation}

Our solution is do not perturb $d^{(-2)}$ at all. That is, we set
\begin{equation}\label{egbis}
d_\hbar(\xi_{123})=d_0(\xi_{123})=[\xi_{12},x_3]+[\xi_{23},x_1]-[\xi_{13},x_2]
\end{equation}
We need to check that this definition agrees with \eqref{w1}.
\begin{lemma}
Within the definitions \eqref{eg} and \eqref{egbis}, one has $d_\hbar^2(\xi_{123})=0$.
\end{lemma}
\begin{proof}
It is easy to see that
\begin{equation}\label{eg2}
d_\hbar^2(\xi_{123})=[\frac{\partial\Phi_\hbar}{\partial x_1},x_1]+[\frac{\partial\Phi_\hbar}{\partial x_2},x_2]+[\frac{\partial\Phi_\hbar}{\partial x_3},x_3]
\end{equation}
We claim that the sum in the right-hand side is 0 (for any cyclic word $\Phi_\hbar$).

Indeed, the first ``half'' of the sum in the r.h.s. of \eqref{eg2},
\begin{equation}\label{eg3}
\frac{\partial\Phi_\hbar}{\partial x_1}\otimes x_1+\frac{\partial\Phi_\hbar}{\partial x_2}\otimes x_2+\frac{\partial\Phi_\hbar}{\partial x_3}\otimes x_3
\end{equation}
is equal to the sum of all possible cuttings (at any place) of the cyclic word $\Phi_\hbar$ to an ordinary word.

The remaining second ``half'' of the right-hand side of \eqref{eg2},
\begin{equation}\label{eg4}
-x_1\otimes \frac{\partial\Phi_d}{\partial x_1}-x_2\otimes \frac{\partial\Phi_d}{\partial x_2}-x_3\otimes \frac{\partial\Phi_d}{\partial x_3}
\end{equation}
is equal to the same sum of all possible cuttings of the cyclic word $\Phi_d$ to an ordinary word, taken with the opposite sign.

The contributions of \eqref{eg3} and \eqref{eg4} cancel each other.
\end{proof}

Now Lemma \ref{th3} follows directly from Theorem \ref{lemma21bis}.
\end{proof}

Theorem \ref{main1}(ii) gives now the following result.
\begin{coroll}
Let $\Phi_\hbar$ be as in \eqref{phibar}, and let $A=\mathscr{U}_\hbar(\Phi_\hbar)$. Then for almost all $a\in k$ the specialization $A_a$ is a PBW algebra in the sense of Definition \ref{pbwd1bis}, and for all such $a\in k$ is the specialized algebra is isomorphic to $\mathscr{U}(\Phi(a))$, where $\Phi(a)$ is the value of $\Phi(\hbar)$ at $\hbar=a$.
\end{coroll}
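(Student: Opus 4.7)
The plan is to combine Lemma \ref{th3} with Theorem \ref{main1}(ii). Lemma \ref{th3} has just established that $A=\mathscr{U}_\hbar(\Phi_\hbar)$ satisfies the PBW-like property for the descending filtration $\{\Gamma_\ell\}$ of Definition \ref{pbwd2}. Since the generating space $V=\mathrm{span}_k\{x,y,z\}$ is finite-dimensional (of dimension $3$), the hypothesis of Theorem \ref{main1}(ii) is fulfilled, and I can feed $A$ directly into that theorem. It produces an at-most-countable exceptional subset $\mathscr{S}\subset k$ such that for every $a\in k\setminus\mathscr{S}$ the specialization $A_a=A\otimes_{k[\hbar]}k[\hbar]/(\hbar-a)$ is a PBW algebra in the sense of Definition \ref{pbwd1bis}. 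This already settles the first half of the statement.

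For the identification $A_a\simeq\mathscr{U}(\Phi(a))$, I would simply use that ideal generation commutes with specialization. Writing $A=T(V)[\hbar]/I$, where $I$ is the two-sided ideal generated by the three relations \eqref{tsi}, right-exactness of the functor $-\otimes_{k[\hbar]}k[\hbar]/(\hbar-a)$ gives
\[
A_a\simeq T(V)[\hbar]/\bigl(I+(\hbar-a)T(V)[\hbar]\bigr)\simeq T(V)/\overline{I}_a,
\]
where $\overline{I}_a\subset T(V)$ is the two-sided ideal generated by the values at $\hbar=a$ of the three generators of $I$. Since each operator $\partial/\partial x_i$ is $k[\hbar]$-linear, these specialized generators are precisely $[x,y]-\partial_z\Phi(a)$, $[y,z]-\partial_x\Phi(a)$, $[z,x]-\partial_y\Phi(a)$, and the quotient of $T(V)$ by them is $\mathscr{U}(\Phi(a))$ by definition.

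The substantive work is Lemma \ref{th3}, already proved; the Corollary itself is essentially a bookkeeping assembly of earlier results, so I do not foresee a genuine obstacle. The only point worth flagging is why one must invoke Theorem \ref{main1}(ii) rather than (i): as the Remark after Theorem \ref{main1} warns, for non-linear (and in particular cubic) potentials the intersection $\bigcap_{\ell\ge 0}\Gamma_\ell$ will generally fail to vanish, so part (i) is unavailable and one is forced to accept the countable exceptional locus $\mathscr{S}$ produced by part (ii).
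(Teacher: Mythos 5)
Your proposal is correct and follows essentially the same route as the paper, which obtains the Corollary precisely by combining Lemma \ref{th3} with Theorem \ref{main1}(ii). Your explicit right-exactness argument identifying $A_a$ with $\mathscr{U}(\Phi(a))$ merely spells out a step the paper leaves implicit (cf.\ Lemma \ref{trivial} and the statement of Theorem \ref{main1}), so there is no discrepancy.
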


We conclude the discussion of Etingof-Ginzburg algebras by an explicit computation, illustrating the assumption in Theorem \ref{main1}(ii): $\bigcap_\ell\Gamma_\ell(A)\ne 0$.

\subsubsection{\sc Example}\label{exx}
Consider $\Phi_\hbar=\hbar\mathrm{Cycl}(-zyx)$ where $\mathrm{Cycl}(-)$ denotes the cyclic word associated to a non-commutative monomial.
Then the algebra $\mathscr{U}_\hbar^\pol(\Phi_\hbar)$ has the following relations:
\begin{equation}\label{strange}
\begin{aligned}
\ &[x,y]=-\hbar yx\\
&[y,z]=-\hbar zy\\
&[z,x]=-\hbar xz
\end{aligned}
\end{equation}
Its specialization at $\hbar=1$ gives the associative algebra with relations:
\begin{align}
&xy=0\notag\\
&yz=0\notag\\
&zx=0\notag
\end{align}
The basis of this algebra as a $k$-vector space is formed by the monomials:
$$
x^{m_1}z^{m_2}y^{m_3}x^{m_4}z^{m_5}y^{m_6}\dots
$$
with all $m_i\ge 0$. It is clear that the graded components of this algebra are much bigger than the corresponding graded components for $S(x,y,z)$. That is, this algebra is not a PBW algebra.

It follows from Theorem \ref{main1}(i) that for $\Phi_\hbar=-\hbar zyx$ one should have
\begin{equation}\label{krull1}
I(\Phi_\hbar)=\bigcap_{\ell=0}^\infty \Gamma_\ell\mathscr{U}_\hbar(\Phi_\hbar)\ne 0
\end{equation}

Remind the Krull intersection theorem:
\begin{theorem}[Krull]
Let $A$ be a noetherian commutative ring, $\frak{q}$ an ideal of $A$, and $M$ a finitely generated $A$-module. Then $m\in \cap_n \frak{q}^nM$ if and only if there exists $d\in\frak{q}$ such that $dm=m$. In particular, if $\cap_n\frak{q}^nM\ne 0$, there exist $0\ne m\in M$ and $d\in\mathfrak{q}$ such that $dm=m$.
\end{theorem}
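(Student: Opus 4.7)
The plan is to reduce the statement to a Nakayama-style determinant trick via the Artin--Rees lemma, which is the standard route. Set $N = \bigcap_n \mathfrak{q}^n M$. The ``if'' direction is immediate: if $dm = m$ for some $d \in \mathfrak{q}$, then inductively $m = d^n m \in \mathfrak{q}^n M$ for every $n$, so $m \in N$. The real content is the ``only if'' direction.

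First I would invoke the Artin--Rees lemma, which says that for a finitely generated module $M$ over a noetherian ring $A$, an ideal $\mathfrak{q} \subseteq A$, and a submodule $N \subseteq M$, there exists $k \geq 0$ such that
\[
\mathfrak{q}^n M \cap N \;=\; \mathfrak{q}^{n-k}\bigl(\mathfrak{q}^k M \cap N\bigr)
\]
for all $n \geq k$. The proof of Artin--Rees itself proceeds by considering the Rees algebra $\bigoplus_n \mathfrak{q}^n t^n \subseteq A[t]$, which is noetherian because $A$ is noetherian and $\mathfrak{q}$ is finitely generated; the graded module $\bigoplus_n (\mathfrak{q}^n M \cap N)\, t^n$ is then finitely generated over this Rees algebra, which yields the stated equality for large $n$.

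With Artin--Rees in hand, apply it to $N = \bigcap_n \mathfrak{q}^n M$: since $N \subseteq \mathfrak{q}^n M$ for every $n$, we have $\mathfrak{q}^n M \cap N = N$ for all $n$, so the lemma gives $N = \mathfrak{q}^{n-k} N$ for all $n \geq k$. Taking $n = k+1$ yields $N = \mathfrak{q} N$. Since $M$ is finitely generated over a noetherian ring, the submodule $N$ is finitely generated as well; pick generators $n_1, \dots, n_s$. From $N = \mathfrak{q} N$ I can write $n_i = \sum_j q_{ij} n_j$ with $q_{ij} \in \mathfrak{q}$, so that the matrix $\mathbf{I} - Q$ annihilates the column vector $(n_1, \dots, n_s)^T$.

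Multiplying by the adjugate of $\mathbf{I} - Q$ (the standard determinant trick) shows that $\det(\mathbf{I} - Q)$ annihilates each $n_i$, hence all of $N$. Expanding the determinant gives $\det(\mathbf{I} - Q) = 1 - d$ for some $d \in \mathfrak{q}$, so $(1-d) m = 0$, i.e.\ $dm = m$, for every $m \in N$. The final assertion of the theorem follows by taking any nonzero $m \in \bigcap_n \mathfrak{q}^n M$. The main obstacle in this route is the Artin--Rees lemma, whose proof requires the noetherianness of the Rees algebra; everything after that is formal linear algebra.
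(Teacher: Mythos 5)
Your proof is correct: the Artin--Rees lemma applied to $N=\bigcap_n\mathfrak{q}^nM$ gives $N=\mathfrak{q}N$, and the determinant trick then produces $d\in\mathfrak{q}$ with $(1-d)N=0$, which is exactly the nontrivial direction. The paper does not prove this theorem itself but cites [M, Theorem 8.9], whose proof is precisely this standard Artin--Rees plus Nakayama-type argument, so your route coincides with the one the paper relies on.
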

See e.g. [M, Theorem 8.9] for a proof.

For some non-commutative rings, the analogous result still holds, see [MC].

We can apply the result from loc.cit. to $A=\mathscr{U}_\hbar(\Phi_\hbar)$, for $\Phi_\hbar=-\hbar zyx$, and for $\frak{q}=\hbar\mathscr{U}_\hbar(\Phi_\hbar)$, $M=A$. It is instructive to find explicitly an element $m\in \mathscr{U}_\hbar(\Phi_\hbar)$ in the intersection $\bigcap_{\ell\ge 0}\Gamma_\ell(A)$. It turns out that such $m$ can be found {\it cubic} in $x,y,z$, with $d=\hbar\cdot 1$.

For this end, consider the equations \eqref{strange}.
We can multiply the first among them by $z$ from the left and from the right (and thus we get 2 equations), multiply the second one by $x$ from the left and from the right, and multiply the third one by $y$ from the left and from the right. Overall, we get 6 equations listed below.
\begin{align}
(A1)\ \ \ &zxy-zyx=-\hbar zyx\notag\\
(C1)\ \ \ &xyz-yxz=-\hbar yxz\notag\\
(C2)\ \ \ &xyz-xzy=-\hbar xzy\notag\\
(B1)\ \ \ &yzx-zyx=-\hbar zyx\notag\\
(B2)\ \ \ &yzx-yxz=-\hbar yxz\notag\\
(A2)\ \ \ &zxy-xzy=-\hbar xzy\notag
\end{align}
We group these 6 equations in three groups, labeled $A$,$B$, and $C$. Each group contains two equations, having equal first summands in the l.h.s.
Thus, the l.h.s. of $(A1)$ and $(A2)$ have equal to $zxy$ first summands.

Now we consider three new equations, obtained from these 6 equations as $(A1)-(A2)$, $(B1)-(B2)$, $(C1)-(C2)$. They are
\begin{align}
(A)\ \ \ &-zyx+xzy=\hbar(-zyx+xzy)\notag\\
(B)\ \ \ &-zyx+yxz=\hbar(-zyx+yxz)\notag\\
(C)\ \ \ &-yxz+xzy=\hbar(-yxz+xzy)\notag
\end{align}
(notice that there is 1 linear dependance between these three equations).

Each of these three lines has form $T=\hbar\cdot T$ for some cubic $T$. Then we have:
$$
T=\hbar\cdot T=\hbar^2\cdot T=\hbar^3\cdot T=\dots
$$
which shows that each such $T$ belongs to $\bigcap_{\ell\ge 0}\Gamma_\ell(\mathscr{U}_\hbar(\Phi_\hbar))$.

Notice that any such $T$ is killed under the completion homomorphism, as the equation $(\hbar-1)T=0$, under formal power series in $\hbar$, implies $T=0$, as $\hbar-1$ becomes invertible.

\subsection{\sc General case of quadratic algebras}\label{section33}
\comment
Let $\alpha$ be a Poisson bivector on $k^n$, where $k$ is a field.
If in coordinates
\begin{equation}\label{pq1}
\alpha=\sum_{i,j}\alpha_{ij}\partial_i\wedge\partial_j,\ \ \ \alpha_{ij}=-\alpha_{ji}
\end{equation}
the condition $\{\alpha,\alpha\}=0$ reads: for any $i,j,k$
\begin{equation}\label{pq2}
\sum_s\left(\alpha_{is}\partial_s\alpha_{jk}+\alpha_{js}\partial_s\alpha_{ki}+\alpha_{ks}\partial_s\alpha_{ij}\right)=0
\end{equation}
Suppose now that $\alpha$ is quadratic, that is
\begin{equation}\label{pq3}
\alpha_{ij}=\sum_{a,b}\alpha_{ij}^{ab}x_ax_b,\ \ \ \alpha_{ij}^{ab}\in k,\ \alpha_{ij}^{ab}=\alpha_{ij}^{ba}
\end{equation}
Then \eqref{pq2} can be rewritten as
\begin{equation}\label{pq4}
\Cycl_{i,j,k}\Symm_{a,b,c}\sum_s(\alpha_{is}^{ab}\alpha_{jk}^{sc})=0
\end{equation}
(We can replace $\Cycl_{i,j,k}$ by the full alternation $\Alt_{i,j,k}$, for more ``symmetric'' notations).

\begin{defn}\label{special}{\rm
We call a quadratic bivector $\alpha=\sum_{i,j,a,b}\alpha_{ij}^{ab}x_ax_b\partial_j\wedge\partial_j$ {\it special} if
\begin{equation}\label{pq4bis}
\Cycl_{i,j,k}\sum_s\alpha_{is}^{ab}\alpha_{jk}^{sc}=0 \ \ \forall\ a,b,c,i,j,k
\end{equation}
It is clear that \eqref{pq4bis} implies \eqref{pq4}, by symmetrization in $a,b,c$, that is any special quadratic bivector is Poisson.
}
\end{defn}

For $\alpha$ as in \eqref{pq3}, denote
\begin{equation}\label{pq5}
\varphi_{ij}=\frac12\hbar\alpha_{ij}^{ab}(x_a\otimes x_b+x_b\otimes x_a)\in T^2(k^n)
\end{equation}

We have:
\begin{theorem}
Let $\alpha$ be a special (see Definition {\it \ref{special}}) quadratic (Poisson) bivector. Then the non-commutative quadratic polynomials
$$
\varphi_{ij}=\hbar\sum_{a,b}\alpha_{ij}^{ab}(x_a\otimes x_b+x_b\otimes x_a)
$$
obey the assumption of Definition \ref{pbwd2}, for $A=T(x_1,\dots,x_n)[\hbar]/(x_i\otimes x_j-x_j\otimes x_i-\varphi_{ij})_{1\le i<j\le n}$.

Consequently, its specialization $A_a$ is a PBW algebra for almost all $a\in k$, in the sense of Definition \ref{pbwd1bis}.
\end{theorem}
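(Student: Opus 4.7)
The plan is to apply Theorem \ref{main1}(ii), for which it suffices to verify the PBW-like property of Definition \ref{pbwd2}. Following the strategy of Section \ref{concl} and Theorem \ref{lemma21bis}, this reduces in turn to constructing a perturbed differential $d_\hbar^{(-2)}\colon \mathcal{R}_\hbar^{-2}\to\mathcal{R}_\hbar^{-1}$, extending the Koszul $d_0^{(-2)}$, with $d_\hbar^{(-1)}\circ d_\hbar^{(-2)}=0$; here $d_\hbar^{(-1)}(\xi_{ij})=[x_i,x_j]-\varphi_{ij}$ is forced by the relations of $A$.

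First, I would start from the unperturbed $d_0^{(-2)}(\xi_{ijk})=[x_i,\xi_{jk}]+[x_j,\xi_{ki}]+[x_k,\xi_{ij}]$. The Jacobi identity for the tensor commutator makes the triple-bracket contributions cancel, leaving the $O(\hbar)$ obstruction
$$
d_\hbar^{(-1)}\circ d_0^{(-2)}(\xi_{ijk}) = -\mathrm{Cycl}_{ijk}[x_i,\varphi_{jk}].
$$
To cancel it, introduce the first-order correction (up to a normalization constant)
$$
d_1^{(-2)}(\xi_{ijk}) = \hbar\,\mathrm{Cycl}_{ijk}\sum_{a,b}\alpha_{jk}^{ab}\bigl(\xi_{ia}\otimes x_b + x_a\otimes \xi_{ib}\bigr).
$$
A short computation, crucially exploiting the symmetry $\alpha_{jk}^{ab}=\alpha_{jk}^{ba}$, shows that applying $d_\hbar^{(-1)}$ to this correction produces at leading order in $\hbar$ exactly $\mathrm{Cycl}_{ijk}[x_i,\varphi_{jk}]$. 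Thus $d_\hbar^{(-1)}\circ(d_0^{(-2)}+d_1^{(-2)})$ becomes of order $\hbar^2$.

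Second, I would analyze the residual $O(\hbar^2)$ obstruction, which arises from the $-\varphi$ contributions in $d_\hbar^{(-1)}(d_1^{(-2)})$. After relabeling, its $\hbar^2$-coefficient in $T^3(V)$ takes the form
$$
-4\,\mathrm{Cycl}_{ijk}\sum_{b,c,d,s}\alpha_{jk}^{sb}\alpha_{is}^{cd}\bigl(x_c\otimes x_d\otimes x_b + x_b\otimes x_c\otimes x_d\bigr).
$$
The ``special'' hypothesis, $\mathrm{Cycl}_{ijk}\sum_s\alpha_{is}^{cd}\alpha_{jk}^{sb}=0$ for all $i,j,k,b,c,d$, kills this expression modulo the tensor commutator ideal; equivalently, its projection to $S^3(V)$ vanishes. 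Hence the obstruction lies in $\mathrm{image}(d_0^{(-1)})$, and a second-order correction $d_2^{(-2)}$ of $\hbar$-order $2$ can be chosen.

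The iteration then continues in the same fashion: each $d_k^{(-2)}$ has $\hbar$-order $k$ and polynomial degree $3$, and applying $d_\hbar^{(-1)}$ contributes only in orders $\hbar^k$ (cancelling the previous obstruction) and $\hbar^{k+1}$ (producing the new one). The main technical challenge will be to show that each successive obstruction again projects to zero in $S^3(V)$; I would attack this by making a manifestly symmetric choice of correction at each step, so that the step-$k$ obstruction unpacks as a nested cyclic sum in the $\alpha$-tensors which reduces, by iterated application of the special identity together with the symmetry $\alpha_{ij}^{ab}=\alpha_{ij}^{ba}$, to zero. Since the resulting $d_\hbar^{(-2)}$ is generically an infinite series in $\hbar$, the construction is carried out over $k[[\hbar]]$. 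Theorem \ref{lemma21bis}(ii) then supplies the PBW-like property for the $\hbar$-adic completion $\hat A$, which descends to $A$ via the identification $\Gamma_iA/\Gamma_{i+1}A=\Gamma_i\hat A/\Gamma_{i+1}\hat A$, verifying Definition \ref{pbwd2}. Theorem \ref{main1}(ii) then concludes that $A_a$ is PBW for all but countably many $a\in k$.
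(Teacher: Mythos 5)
Your reduction to Theorem \ref{lemma21bis} and your first-order correction are exactly the paper's: the paper sets $d_\hbar^{(-2)}(\xi_{ijk})=\Cycl_{ijk}[x_i,\xi_{jk}]+\hbar\,\Cycl_{ijk}\sum_{a,b}\alpha_{jk}^{ab}\bigl(\xi_{ia}\otimes x_b+x_a\otimes\xi_{ib}+x_b\otimes\xi_{ia}+\xi_{ib}\otimes x_a\bigr)$, which by the symmetry $\alpha_{jk}^{ab}=\alpha_{jk}^{ba}$ is twice your two-term correction. The genuine gap is in your handling of the order-$\hbar^2$ obstruction. You claim the ``special'' hypothesis kills it only modulo the commutator ideal (i.e.\ in $S^3(V)$), and on that basis you launch an infinite order-by-order correction scheme over $k[[\hbar]]$. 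But Theorem \ref{lemma21bis} needs $d_\hbar^{(-1)}\circ d_\hbar^{(-2)}=0$ on the nose in $\mathcal{R}^0_\hbar$, and the key inductive step of your scheme --- that every higher obstruction again projects to zero in $S^3(V)$ --- is only gestured at, not proven; nothing in the special identity visibly controls the obstructions at order $\ge 3$, which depend on the corrections you choose along the way. That unproven step is the heart of the matter. The detour through $k[[\hbar]]$ also creates a second unargued point: Definition \ref{pbwd2} is about $A$ over $k[\hbar]$, and the identification $\Gamma_iA/\Gamma_{i+1}A=\Gamma_i\hat A/\Gamma_{i+1}\hat A$ you invoke to descend is asserted without proof.

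The iteration is in fact unnecessary: in the very expression you display, the coefficient array of each of the two families of monomials is precisely $\Cycl_{ijk}\sum_s\alpha_{jk}^{sb}\alpha_{is}^{cd}$, i.e.\ the special identity itself, so the $\hbar^2$ obstruction vanishes identically in $T^3(V)$, not merely in $S^3(V)$. (This is exactly where ``special'' is stronger than ``Poisson'': the Poisson condition would only give vanishing after symmetrizing the upper indices, hence only in $S^3(V)$.) The paper's proof stops there: with the finite perturbation $d_\hbar^{(-2)}=d_0^{(-2)}+\hbar d_1^{(-2)}$ one checks $d_\hbar^{(-1)}\circ d_\hbar^{(-2)}=0$ exactly --- order $\hbar^0$ by the Jacobi identity, order $\hbar^1$ by the design of $d_1^{(-2)}$, order $\hbar^2$ coefficientwise by the special condition --- and then Theorem \ref{lemma21bis}(i) over $k[\hbar]$ together with Theorem \ref{main1}(ii) gives the PBW property for generic specializations. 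To repair your write-up, verify the coefficientwise vanishing of your displayed $\hbar^2$ term and delete the infinite iteration and the completion argument.
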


\begin{proof}
We firstly prove that the algebra $A$ enjoys the property of Definition \ref{pbwd2}, then Theorem \eqref{main1}(ii) gives the PBW property for almost all specializations.

As usual, the value $d(\xi_{ij})$ is fixed by
\begin{equation}\label{pq6}
d(\xi_{ij})=x_i\otimes x_j-x_j\otimes x_i-\varphi_{ij}=x_i\otimes x_j-x_j\otimes x_i-\frac12\hbar\alpha_{ij}^{ab}(x_a\otimes x_b+x_b\otimes x_a)\in T^2(k^n)
\end{equation}
We now define
\begin{equation}\label{pq7}
d(\xi_{ijk})=\Cycl_{ijk}([x_i,\xi_{jk}])+\hbar\Cycl_{ijk}\sum_{a,b}\alpha_{jk}^{ab}(\xi_{ia}\otimes x_b+x_a\otimes \xi_{ib}+x_b\otimes \xi_{ia}+\xi_{ib}\otimes x_a)
\end{equation}
We claim, that $d^2(\xi_{ijk})=0$ for any $i,j,k$.

Denote the first (resp., second) summand of \eqref{pq6} by $d^{(1)}$ (resp., $d^{(2)}$). As well, denote the first (resp. second) summand of \eqref{pq7} by $A$ (resp., $B$).

We claim that
\begin{equation}\label{pq8}
d^{(2)}(A)=-d^{(1)}(B)
\end{equation}
and
\begin{equation}\label{pq9}
d^{(2)}(B)=0
\end{equation}
(In the last two equations, it is assumed that $d^{(1)}$ and $d^{(2)}$ are extended by the Leibniz rule, and that $d^{(1)}(x_i)=d^{(2)}(x_i)=0$ for any $i$). In the proofs below we will see that \eqref{pq8} holds for any quadratic Poisson bivector, whence for \eqref{pq9}
the assumption that $\alpha$ is a {\it special} quadratic Poisson bivector, is essentially used.

{\it Proof of \eqref{pq8}:}

One has:
\begin{equation}\label{pqq1}
d^{(2)}(A)=-\hbar\Cycl_{ijk}[x_i,\varphi_{jk}]=\hbar\Cycl_{ijk}[x_i,\sum_{a,b}\alpha_{jk}^{ab}(x_a\otimes x_b+x_b\otimes x_a)]
\end{equation}
We rewrite the r.h.s. of \eqref{pqq1} as
\begin{equation}\label{pqq2}
-\hbar\Cycl_{ijk}\sum_{ab}\alpha_{jk}^{ab}([x_i,x_a]\otimes x_b+x_a\otimes [x_i,x_b]+[x_i,x_b]\otimes x_a+x_b\otimes [x_i,x_a])
\end{equation}
On the other hand,
\begin{equation}\label{pqq3}
\begin{aligned}
\ &d^{(1)}(B)=\hbar d^{(1)}(\Cycl_{ijk}\sum_{a,b}\alpha_{jk}^{ab}(\xi_{ia}\otimes x_b+x_a\otimes \xi_{ib}+x_b\otimes \xi_{ia}+\xi_{ib}\otimes x_a))=\\
&\hbar \Cycl_{ijk}\sum_{a,b}([x_i,x_a]\otimes x_b+x_a\otimes [x_i,x_b]+x_b\otimes [x_i,x_a]+[x_i,x_b]\otimes x_a)
\end{aligned}
\end{equation}
which is equal, with another sign, to \eqref{pqq2}. In fact, the form of the second summand $B$ in \eqref{pq7} was hinted by this computation.

{\it Proof of \eqref{pq9}:}

We have:
\begin{equation}\label{pq10}
\begin{aligned}
\ &d^{(2)}(B)=\frac12\hbar^2\Cycl_{ijk}\sum_{a,b,c,d}\alpha_{jk}^{ab}\times\\
&\times((\alpha_{ia}^{cd}(x_c\otimes x_d+x_c\otimes x_d)\otimes x_b+\alpha_{ib}^{cd}x_a\otimes(x_c\otimes x_d+x_d\otimes x_c)+\\
&+\alpha_{ia}^{cd}x_b\otimes(x_c\otimes x_d+x_d\otimes x_c)+\alpha_{ib}^{cd}(x_c\otimes x_d+x_d\otimes x_c)\otimes x_a))
\end{aligned}
\end{equation}
One can rewrite \eqref{pq10} as
\begin{equation}\label{pq11}
d^{(2)}(B)=\hbar^2\Cycl_{ijk}\sum_{abcd}\alpha_{jk}^{ab}\alpha_{ia}^{cd}((x_c\otimes x_d+x_d\otimes x_c)\otimes x_b+
x_b\otimes (x_c\otimes x_d+x_d\otimes x_c))
\end{equation}
One can rewrite it even further:
\begin{equation}\label{pq12}
d^{(2)}(B)=\hbar^2\Cycl_{ijk}\sum_{b,c,d}\left(\sum_a (2\alpha_{jk}^{ab}\alpha_{ia}^{cd}+\alpha_{jk}^{ad}\alpha_{ia}^{bc}+\alpha_{jk}^{ac}\alpha_{ia}^{bd})x_c\otimes x_d\otimes x_b)\right)
\end{equation}

For a general quadratic Poisson bivector $\alpha$ this expression is not 0, as \eqref{pq4} is not applicable, because the expression
\begin{equation}\label{pq13}
\sum_a (2\alpha_{jk}^{ab}\alpha_{ia}^{cd}+\alpha_{jk}^{ad}\alpha_{ia}^{bc}+\alpha_{jk}^{ac}\alpha_{ia}^{bd})
\end{equation}
is not symmetric in $b,c,d$.

However, if the stronger than \eqref{pq4} equation \eqref{pq4bis} holds, the expression in \eqref{pq13} is 0, and $d^{(2)}(B)=0$.

Theorem is proven.
\end{proof}

\endcomment
Suppose we are given an associative algebra of type \eqref{eq1} with {\it quadratic} $\varphi_{ij}$. Here we find a criterium under which this algebra enjoys the property of Definition \ref{pbwd2}, which guarantees that it is a PBW algebra in the sense of Definition \ref{pbwd1bis}, for generic specialization of $\hbar=a\in k$.

Let
\begin{equation}\label{f1}
\varphi_{ij}=\hbar\sum_{a,b}\alpha_{ij}^{ab}x_a\otimes x_b
\end{equation}
where
\begin{equation}
\alpha_{ij}^{ab}=-\alpha_{ji}^{ab},  \text{   but   }\alpha_{ij}^{ab}\ne \alpha_{ij}^{ba} \text{   in general}
\end{equation}

Our main result here is
\begin{theorem}
In the notations as above, suppose that for $\forall i,j,k,b,c,d$:
\begin{equation}\label{section3f}
\Cycl_{i,j,k}\sum_s (\alpha_{jk}^{sb}\alpha_{is}^{cd}+ \alpha_{jk}^{cs}\alpha_{is}^{db})=0
\end{equation}
Then the associative algebra $A$ in \eqref{eq1} enjoys the PBW-like property of Definition \eqref{pbwd2} for the descending filtration $\{\Gamma_\ell\}$, and the genuine PBW property of Definition \ref{pbwd1bis} for generic specialization $\hbar=a$.
\end{theorem}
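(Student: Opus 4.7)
Following Section \ref{concl}, the plan is to apply Theorem \ref{lemma21bis} to the Koszul resolution $\mathcal{R}^\udot$ of $S(V)$, with the $(-1)$-component of the perturbed differential dictated by the defining relations,
\begin{equation*}
d_\hbar^{(-1)}(\xi_{ij})\;=\;x_i\otimes x_j-x_j\otimes x_i-\varphi_{ij}.
\end{equation*}
It then suffices to exhibit an $\hbar$-linear perturbation $d_\hbar^{(-2)}(\xi_{ijk})=d_0^{(-2)}(\xi_{ijk})+\mathcal{O}(\hbar)$ such that $d_\hbar^{(-1)}\circ d_\hbar^{(-2)}(\xi_{ijk})=0$ for all $i,j,k$; once this is in place, Theorem \ref{lemma21bis} delivers the PBW-like property of Definition \ref{pbwd2}, and Theorem \ref{main1}(ii) then yields the genuine PBW property of Definition \ref{pbwd1bis} for generic $\hbar=a\in k$.

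Motivated by the classical Lie algebra calculation of Section \ref{section31}, I will try the two-term ansatz
\begin{equation}\label{ans1}
d_\hbar^{(-2)}(\xi_{ijk})\;=\;\Cycl_{ijk}[x_i,\xi_{jk}]\;+\;\hbar\,\Cycl_{ijk}\sum_{a,b}\alpha_{jk}^{ab}\bigl(\xi_{ia}\otimes x_b+x_a\otimes \xi_{ib}\bigr).
\end{equation}
Only two of the four possible positions $\xi_{ia}\otimes x_b,\ x_a\otimes \xi_{ib},\ \xi_{ib}\otimes x_a,\ x_b\otimes \xi_{ia}$ are retained in \eqref{ans1}; this asymmetric choice is forced by the fact that $\alpha_{ij}^{ab}$ is no longer assumed symmetric in $a,b$.

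To verify $d_\hbar^{(-1)}\circ d_\hbar^{(-2)}(\xi_{ijk})=0$, I expand via the graded Leibniz rule and sort contributions by $\hbar$-order. The $\hbar^0$-piece reduces to $\Cycl_{ijk}[x_i,[x_j,x_k]]=0$ by the associative Jacobi identity. The $\hbar^1$-piece from the first summand of \eqref{ans1}, namely $-\Cycl_{ijk}[x_i,\varphi_{jk}]$, cancels the $\hbar^1$-piece from the second summand upon expanding $[x_i,x_a\otimes x_b]=[x_i,x_a]\otimes x_b+x_a\otimes[x_i,x_b]$. What remains is the $\hbar^2$-piece coming from the $\varphi$-parts of $d_\hbar^{(-1)}(\xi_{ia})$ and $d_\hbar^{(-1)}(\xi_{ib})$ hit by the second summand of \eqref{ans1}; after relabelling dummy indices, its coefficient at $x_c\otimes x_d\otimes x_b$ is precisely
\begin{equation*}
\Cycl_{ijk}\sum_s\bigl(\alpha_{jk}^{sb}\alpha_{is}^{cd}+\alpha_{jk}^{cs}\alpha_{is}^{db}\bigr),
\end{equation*}
which vanishes by hypothesis \eqref{section3f}.

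The main obstacle is locating the correct ansatz \eqref{ans1}: a more symmetric choice (incorporating all four positions, or first symmetrising $\alpha_{ij}^{ab}$ in $a,b$) would, after the same computation, force a fully $(b,c,d)$-symmetrised $\hbar^2$-coefficient, matching the weaker Poisson condition mentioned in the introduction rather than the sharper \eqref{section3f}. Once the two-term ansatz is in hand, the remainder of the argument is routine index bookkeeping plus the appeal to Theorems \ref{lemma21bis} and \ref{main1}(ii).
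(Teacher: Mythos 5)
Your proposal is correct and follows essentially the same route as the paper: the identical two-term ansatz $d_\hbar^{(-2)}(\xi_{ijk})=\Cycl_{ijk}[x_i,\xi_{jk}]+\hbar\,\Cycl_{ijk}\sum_{a,b}\alpha_{jk}^{ab}(\xi_{ia}\otimes x_b+x_a\otimes\xi_{ib})$, the same order-by-order cancellation in $\hbar$ (Jacobi identity at order $0$, built-in cancellation at order $1$, and the coefficient of $x_c\otimes x_d\otimes x_b$ at order $2$ reproducing exactly \eqref{section3f}), followed by the same appeal to Theorem \ref{lemma21bis} via Section \ref{concl} and Theorem \ref{main1}(ii). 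No gaps to report.
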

\begin{remark}{\rm
Let $\{\alpha_{ij}^{ab}\}$ satisfies \eqref{section3f}. Let $\beta=\beta_{ij}\partial_i\wedge\partial_j$, $\beta_{ij}=\sum_{a,b}(\alpha_{ij}^{ab}+\alpha_{ij}^{ba})x_ax_b$, be the corresponding bivector on $V=\{x_1,\dots,x_n\}$. Then the bivector $\beta$ is Poisson: in coordinates, for any $i,j,k,a,b,c$:
\begin{equation}\label{poisson}
\sum_s(\beta_{is}^{ab}\beta_{jk}^{sc}+\beta_{js}^{ab}\beta_{ki}^{sc}+\beta_{ks}^{ab}\beta_{ij}^{sc})=0
\end{equation}
Indeed, \eqref{poisson} is obtained from \eqref{section3f} by symmetrization in the upper indices. It is very natural, as the Poisson condition is equivalent to the flatness in degree 2 in $x_i$'s.
}
\end{remark}

\begin{proof}
We prove the first claim, that the corresponding algebra $A$ enjoys the property of Definition \ref{pbwd2}. The last claim follows then immediately from Theorem \ref{main1}(ii).

By Section \ref{concl} and Theorem \ref{lemma21bis}, we need to perturb the (-2)-component of the differential $d_\hbar^{(-2)}(\xi_{ijk})$, such that
\begin{equation}\label{f2}
d_\hbar^{(-1)}(\xi_{ij})=x_i\otimes x_j-x_j\otimes x_i-\varphi_{ij}
\end{equation}
where $\varphi_{ij}$ is as in \eqref{f1}. Then the only property to be checked is
\begin{equation}\label{f4}
d^{(-1)}_\hbar\circ d_\hbar^{(-2)}(\xi_{ijk})=0
\end{equation}
for any $i,j,k$.

We define
\begin{equation}\label{f3}
d_\hbar^{(-2)}(\xi_{ijk})=\Cycl_{i,j,k}[x_i,\xi_{jk}]+\hbar \Cycl_{i,j,k}\sum_{a,b}\alpha_{jk}^{ab}(\xi_{ia}\otimes x_b+x_a\otimes \xi_{ib})
\end{equation}

Then \eqref{f4} has the terms of order 0,1,2 in $\hbar$.
The cancelation of $\hbar^0$-terms is just the condition $d^2=0$ for the non-perturbed differential.
The cancelation in order $\hbar^1$ is straightforward and does not use \eqref{section3f}; in fact, the formula \eqref{f3} was designed especially to maintain this cancelation.

We need only to check \eqref{f4} in order $\hbar^2$ then.
We have:
\begin{equation}\label{f5}
d^2_\hbar(\xi_{ijk})=\hbar^2\Cycl_{ijk}\sum_{abcd}(\alpha_{jk}^{ab}\alpha_{ia}^{cd}x_c\otimes x_d\otimes x_b+\alpha_{jk}^{ab}\alpha_{ib}^{cd}x_a\otimes x_c \otimes x_d)
\end{equation}
To have valid \eqref{f4}, the coefficient at $x_a\otimes x_b\otimes x_c$ for any $a,b,c$ (and for any $i,j,k$) should vanish. It gives our condition \eqref{section3f}.

\end{proof}

\begin{remark}{\rm
For an algebra $A$ of type \eqref{eq1}, one has
\begin{equation}\label{re1}
[x_i,x_j]=\hbar\varphi_{ij}
\end{equation}
where by $x_i$'s and by $\varphi_{ij}$ is meant their image in the quotient-algebra $T(V)[\hbar]/\mathscr{I}$. The Jacobi identity
\begin{equation}\label{re2}
\Cycl_{i,j,k}[x_i,[x_j,x_k]]=0
\end{equation}
holds in any associative algebra. Using \eqref{re1} twice, it gives
\begin{equation}\label{re3}
\overline{\Cycl_{ijk}\sum_{a,b,c,d}(\alpha_{jk}^{ab}\alpha_{ia}^{cd}x_c\otimes x_d\otimes x_b+ \alpha_{jk}^{ab}\alpha_{ib}^{cd}x_a\otimes x_c\otimes x_d)}=0
\end{equation}
if
\begin{equation}\label{re4}
\varphi_{ij}=\hbar\sum_{a,b}\alpha_{ij}^{ab}x_a\otimes x_b
\end{equation}
where $\overline{\omega}$ for an $\omega\in T(V)[\hbar]$ stands for its image in the quotient-algebra $A=T(V)[\hbar]/\mathscr{I}$.

That is, \eqref{re3} holds for any $\{\varphi_{ij}\}$. Our condition \eqref{section3f} is exactly \eqref{re3} without the overline sign.

That is, it says that not only the image $\overline{\omega}$ of
\begin{equation}\label{re5}
\omega=\Cycl_{ijk}\sum_{a,b,c,d}(\alpha_{jk}^{ab}\alpha_{ia}^{cd}x_c\otimes x_d\otimes x_b+ \alpha_{jk}^{ab}\alpha_{ib}^{cd}x_a\otimes x_c\otimes x_d)
\end{equation}
vanishes in the quotient-algebra $T(V)[\hbar]/\mathscr{I}$ but, essentially stronger, $\omega$ itself is a zero element of $T(V)[\hbar]$.
}
\end{remark}

The Etingof-Ginzburg algebras do not obey, in general, the condition \eqref{section3f}. Seemingly, the case of dimension 3 is very special for examples of PBW algebras, due to some ``additional symmetries'' in dimension 3. When we are interested in PBW algebras defined over {\it polynomials} $k[\hbar]$, this symmetry makes possible the existence of some ``irregular'' examples.

\bigskip

\noindent {\sc Universiteit Antwerpen, Campus Middelheim, Wiskunde en Informatica, Gebouw G\\
Middelheimlaan 1, 2020 Antwerpen, Belgi\"{e}}

\bigskip

\noindent{{\it e-mail}: {\tt Boris.Shoikhet@ua.ac.be}}

\end{document}